\newcommand{\R}{\mathbb{R}}
\newcommand{\N}{\mathbb{N}}
\newcommand{\NN}{{\mathcal{N}}}
\newcommand{\EE}{\mathcal{E}}
\newcommand{\eg}{{\it e.g. }}
\newcommand{\lp}{\lambda_p}
\newcommand{\ssum}{\displaystyle\sum}
\newcommand{\llim}{\displaystyle\lim}
\newcommand{\pprod}{\displaystyle\prod}
\newcommand{\mmin}{\displaystyle\min}
\newcommand{\mmax}{\displaystyle\max}
\newcommand{\rr}{{\bf r}}
\newcommand{\nn}{{\lfloor \frac{n}{2} \rfloor}}
\newcommand{\rrr}{{\rr(t_{p+1})}}
\newcommand{\ratio}{\rho}
\newtheorem{theorem}{Theorem}
\newtheorem{lemma}[theorem]{Lemma}
\newtheorem{proposition}[theorem]{Proposition}
\theoremstyle{definition}
\newtheorem{definition}{Definition}
\theoremstyle{remark}
\newtheorem{assumption}{Assumption}
\begin{document}

\title[Consensus under persistent connectivity]{Continuous-Time Consensus under Persistent Connectivity and Slow Divergence of\\ Reciprocal Interaction Weights}


\author[Samuel Martin]{Samuel Martin}
\address{Laboratoire Jean Kuntzmann \\
Universit\'e de Grenoble \\
B.P. 53, 38041 Grenoble, France} \email{samuel.martin@imag.fr}

\author[Antoine Girard]{Antoine Girard}
\address{Laboratoire Jean Kuntzmann \\
Universit\'e de Grenoble \\
B.P. 53, 38041 Grenoble, France} \email{antoine.girard@imag.fr}


\maketitle


\begin{abstract}
In this paper, we present new results on consensus for continuous-time multi-agent systems. 
We introduce the assumptions of persistent connectivity of the interaction graph 
and of slow divergence of reciprocal interaction weights. 
Persistent connectivity can be considered as the counterpart of the notion of ultimate connectivity used in 
discrete-time consensus protocols. 
Slow divergence of reciprocal interaction weights generalizes the assumption of cut-balanced interactions.
We show that under these two assumptions, the continuous-time consensus protocol succeeds: the 
states of all the agents converge asymptotically to a common value. Moreover, our proof allows us to give an estimate of the rate of convergence towards the consensus.
We also provide examples that make us think that both of our assumptions are tight.
\end{abstract}

\section{Introduction}

In multi-agent systems, consensus algorithms serve to emulate the process of agreement: agents exchange information in order to decrease the distance between their states (representing e.g. positions, velocities or opinions depending on the application). 
A multi-agent system is said to reach a consensus when the states of all agents converge asymptotically toward a common value. 
Suitable conditions for convergence to a consensus are typically based on the topology of the network (or graph) of interactions and on the weights of these interactions. 
Consensus algorithms have attracted a lot of attention in the past decade. Notable convergence results include~\cite{Jadbabaie2003,Moreau2005,Ren2005,Blondel2005,Hendrickx2011} for the discrete time and~\cite{Saber2004,Moreau2004,Ren2005,Hendrickx2011,Cao2011,Cao2011dir} for the continuous time consensus algorithm. 

We can classify these results depending on whether or not they require some notion of {reciprocity} in the interaction weights.
Types of reciprocity include:
\begin{itemize}
\item {\it strongly symmetric interactions}: when agent $i$ influences agent $j$  via interaction weight $a_{ji}(t)>0$, agent $j$ also influences agent $i$ via the same weight $a_{ij}(t)=a_{ji}(t)$;

\item {\it weakly symmetric interactions}: when agent $i$ influences agent $j$ via interaction weight $a_{ji}(t)>0$, agent $j$ also influences agent $i$ via a weight $a_{ij}(t)>0$;

\item {\it balanced interactions}: the sum of the interaction weights from and to an agent $i$ are equal, i.e. $\sum_{j\ne i} a_{ji}(t)=\sum_{j\ne i} a_{ij}(t)$;

\item {\it cut-balanced interactions}: for any subgroup  $S$ of agents,
the ratio of reciprocal weights $\sum_{i\in S, j \notin S}a_{ij}(t) \; / \; \sum_{i\in S, j \notin S}a_{ji}(t)$ is bounded.
\end{itemize}

The results presented in this paper also assume some kind of reciprocity in the interaction weights. This allows us to consider weaker assumptions on the connectivity of the interaction graph. We briefly review the most general results in the literature. 
In discrete time, the most general result regarding consensus with reciprocity can be found in~\cite{Moreau2005} or~\cite{Blondel2005}: it is shown that under weakly symmetric interactions, the consensus is achieved whenever the condition of {\it ultimate connectivity} of the interaction graph is satisfied : for all time $t \ge 0$, the union of the interaction graphs over the time interval $[t,+\infty)$ should be connected. In these results, the interaction weights are assumed to be bounded. 
In continuous time, the recent result from Hendrickx and Tsitsiklis~\cite{Hendrickx2011} shows that consensus is achieved under cut-balanced interactions and connectivity of the unbounded interaction graph (the graph with edges $(j,i)$ if $\int_0^\infty a_{ij}(t)dt = +\infty$). 
The same result had been obtained assuming strongly symmetric interactions in~\cite{Cao2011} where the connectivity of the unbounded interaction graph is called infinite integral connectivity.

In this paper, we extend the continuous-time result from~\cite{Hendrickx2011} replacing the cut-balanced interactions assumption by a weaker one: we assume {\it slow divergence of reciprocal weights} (the ratio of reciprocal weights is at worst slowly diverging to infinity). This enables the reciprocal interaction weights to be indefinitely far apart. The proof of our result differs from the one in~\cite{Hendrickx2011} and it allows us to give an explicit bound on the convergence rate to consensus. 
%
The remaining of the paper is organized as follows. 
Section~\ref{sec:pb-statement} formally introduces the consensus algorithm we will be dealing with and states the main results of the paper; in Section~\ref{sec:as}, we discuss the tightness of our assumptions using two examples and provide a comparison with related results. Finally, Section~\ref{sec:conv} presents the proof of our main result.

\section{Problem Statement and Main Results}\label{sec:pb-statement}

The system we study in this paper consists of $n$ {\it agents} interacting  with each other according to a continuous-time consensus protocol. Agents are labeled from $1$ to $n$ and $\NN = \{1,\ldots, n\}$ denotes the label set of the agents. Agents adjust their {\it positions} $x_i(t) \in \R$ for $i\in \NN$ according to the following differential equation
\begin{equation}\label{sys}
\dot{x}_i(t) = \ssum_{j=1}^{n}a_{ij}(t)(x_j(t)-x_i(t)),\; i \in \NN
\end{equation}
where for all $i,j \in \NN$, the {\it interaction weight} $a_{ij}$ represents the strength of the influence of agent $j$ on agent $i$ and is a non-negative measurable function of time, summable on bounded intervals of $\R^+$. 
In its vector form, the equation can be written as $\dot{x}(t) = A(t) x(t)$ where the matrix $A$ is summable on finite intervals of $\R^+$.

Since the right-hand-side of the differential equation~(\ref{sys} may be discontinuous, the equation should be understood as a {\it Carath\'eodory differential equation} (see \eg \cite{Filippov1988}).  A solution to equation (\ref{sys}) is then a locally absolutely continuous function of time $x:\R^+ \rightarrow \R^n$ which satisfies for all $t\in \R^+$ the integral equation:
$$
x_i(t) = x_i(0) + \int_0^t \ssum_{j=1}^{n}a_{ij}(s)(x_j(s)-x_i(s))ds,\; i \in \NN.
$$
\begin{theorem}\label{th:existence-uniqueness}
For any given initial positions $x_i(0)=x_i^0$, $i\in \NN$, the solution to the Carath\'eodory differential equation (\ref{sys}) where the matrix $A$ is summable on finite intervals of $\R^+$ exists and is unique.
\end{theorem}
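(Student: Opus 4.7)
The plan is to prove existence and uniqueness on each bounded interval $[0,T]$ with $T>0$; since $T$ is arbitrary and two solutions on overlapping intervals must agree by uniqueness, this yields the claim on all of $\R^+$.

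For existence on $[0,T]$, I would recast the Carath\'eodory ODE as a fixed-point problem for the integral operator $\mathcal{T}$ defined on $C([0,T],\R^n)$ by
\[
(\mathcal{T}x)(t) = x^0 + \int_0^t A(s)\, x(s)\, ds.
\]
Since $A$ is summable on $[0,T]$ and any $x \in C([0,T],\R^n)$ is bounded there, the integrand $s\mapsto A(s) x(s)$ is measurable (as the product of a measurable matrix-valued function by a continuous vector-valued function) and dominated by $\|A(s)\|\,\|x\|_\infty$, hence integrable. Thus $\mathcal{T}x$ is a well-defined (absolutely continuous) element of $C([0,T],\R^n)$. I would then equip this space with the weighted norm
\[
\|x\|_w = \sup_{t \in [0,T]} e^{-K\,\alpha(t)} \|x(t)\|, \qquad \alpha(t) = \int_0^t \|A(s)\|\, ds,
\]
for some constant $K > 1$. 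A direct estimate yields $\|\mathcal{T}x - \mathcal{T}y\|_w \le \tfrac{1}{K}\|x-y\|_w$, so $\mathcal{T}$ is a contraction on a Banach space, and Banach's fixed-point theorem produces a unique fixed point $x$ satisfying the integral equation. Because $s\mapsto A(s)x(s)$ is integrable, $x$ is locally absolutely continuous and $\dot x(t) = A(t) x(t)$ for almost every $t$.

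For uniqueness on $\R^+$, suppose $x$ and $y$ are two solutions sharing the same initial value $x^0$, and let $z = x - y$. Then $z(t) = \int_0^t A(s) z(s)\, ds$, so $\|z(t)\| \le \int_0^t \|A(s)\|\,\|z(s)\|\, ds$, and Gr\"onwall's inequality forces $z \equiv 0$ on $[0,T]$; since $T$ was arbitrary, $x \equiv y$ on $\R^+$.

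The main obstacle is of a regularity rather than conceptual nature: because $A$ is only measurable in $t$, the classical Picard-Lindel\"of theorem does not apply and one cannot appeal to pointwise Lipschitz bounds. Working directly with the integral equation and the weighted norm $\|\cdot\|_w$ circumvents this, exploiting the summability of $A$ in the exponent to turn a potentially large multiplicative factor into the contraction constant $1/K$. Once the measurability/integrability of the integrand is secured, the argument reduces to a standard application of Carath\'eodory-Filippov theory to a linear system.
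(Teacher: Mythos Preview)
Your argument is correct. The weighted (Bielecki) norm trick turns the integral operator into a global contraction on $C([0,T],\R^n)$, and the completeness of this space under $\|\cdot\|_w$ follows because $\alpha$ is continuous on $[0,T]$, so $e^{-K\alpha(\cdot)}$ is bounded above and below by positive constants and $\|\cdot\|_w$ is equivalent to the sup norm. The contraction estimate $\|\mathcal{T}x-\mathcal{T}y\|_w\le K^{-1}\|x-y\|_w$ is obtained exactly as you indicate, using $\int_0^t \|A(s)\|e^{K\alpha(s)}\,ds = K^{-1}(e^{K\alpha(t)}-1)\le K^{-1}e^{K\alpha(t)}$.

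By contrast, the paper does not spell out any argument: it simply invokes Theorem~54 and Proposition~C.3.8 of Sontag's \emph{Mathematical Control Theory}, which package the Carath\'eodory existence and uniqueness theory for linear systems with locally integrable coefficients. So your route is genuinely different in presentation---a direct, self-contained fixed-point proof rather than a black-box citation. What your approach buys is transparency (the reader sees exactly where summability of $A$ enters) and independence from an external reference; what the paper's approach buys is brevity, since the result is entirely standard once one knows where to look.
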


The proof of this result is a direct consequence of Theorem~54 and Proposition~C.3.8 in~\cite[pages 473-482]{Sontag98}.
We call a solution to equation (\ref{sys}), a {\it trajectory} of the system.
We say that a trajectory {\it reaches a consensus} when $\lim_{t\rightarrow +\infty} x_i(t)$ exist and are the same, for all $i\in \NN$. The common limit is called the {\it consensus value}.
%
We define the {\it group diameter} as  
\begin{equation}\label{eq:diameter}
\Delta_\NN(t) = \mmax_{i\in \NN} x_i(t) - \mmin_{j\in \NN} x_j(t).
\end{equation}
It can be easily shown that $\max_{i\in \NN} x_i(t)$ is non-increasing and that 
$\min_{j\in \NN} x_j(t)$ is non-decreasing. Then, it is clear that the group diameter is non-increasing and that the trajectory reaches a consensus if and only if 
$\llim_{t\rightarrow +\infty} \Delta_\NN(t) = 0.$

Our convergence result involves assumptions on the interaction weights.  
Let $S$ be some non-empty proper subset of $\NN$, we define the ratio between reciprocal interaction weights from $S$ to $\NN \setminus S$:
\begin{equation*}
r_S(t) =  \left\{
\begin{array}{ll}
\frac{\ssum_{i\in S, j \notin S}a_{ij}(t)}{\ssum_{i\in S, j \notin S}a_{ji}(t)}
&\text{if the denominator is positive,} \\
1 &\text{if numerator and denominator are equal to zero,} \\
+\infty &\text{if the denominator is zero and the numerator is positive.}
\end{array}
\right.
\end{equation*}
Then, we define the maximal ratio between reciprocal interaction weights as follows:
\begin{equation}\label{eq:r}
r(t) = \mmax_{ S\neq \emptyset, S\subsetneq \NN} r_S(t)
\end{equation}
For manipulation purposes, we shall use the maximal value of $r$ over all past times. Let 
\begin{equation}\label{eq:rmax}
\rr(t) = \sup_{s\in[0,t]} r(s).
\end{equation}
As defined, $\rr$ is a non-decreasing function of time, and $\rr(t)$ is always greater than $1$. A direct consequence of this definition is the following statement:
\begin{equation}\label{eq:r-bound}
\forall S\neq \emptyset, S\subsetneq \NN, \: \forall s \in [0,t], \; \frac{1}{\rr(t)}\ssum_{i\in S, j \notin S}a_{ij}(s) \leq \ssum_{i\in S, j \notin S}a_{ji}(s) \leq \rr(t) \ssum_{i\in S, j \notin S}a_{ij}(s).
\end{equation}
Thus, whenever a subgroup $S$ of agents influences the rest of the group via interaction weights of sum $a(s)>0$ at time $s\le t$, we know that $S$ is influenced back via interaction weights of sum no less than $\frac{a(s)}{\rr(t)}$.

In our convergence result, we shall make two assumptions on interaction weights. 
The first one is concerned with the topology of the interactions and involves the notion of strong connectivity. A graph, with $\NN$ as set of vertices, is said to be strongly connected when there is a directed path going from node $i$ to node $j$ for all distinct nodes $i, j\in\NN$.
\begin{assumption}[Persistent Connectivity]\label{as:persistent-conn}
The graph $(\NN,\tilde{\EE})$ is strongly connected where 
$$\tilde{\EE} = \left\{(j,i) \in \NN \times \NN \; | \; \int_0^{+\infty}a_{ij}(s)ds = +\infty\right\}.$$
\end{assumption}

This assumption allows us to define a sequence of time instants $(t_p)_{p\in \N}$ which implicitly defines a rescaling of time according to the speed of growth of $\int_0^{t}a_{ij}(s)ds$ for $(j,i)\in \tilde{\EE}$. 
Let $t_0 = 0$ and, for $p\in \N$, let us define $t_{p+1}$ as the last element of the intermediate finite sequence $(t_p^0,t_p^1,\ldots,t_p^\nn)$ where $\lfloor \cdot \rfloor$ is the floor function, $t_p^0 = t_p$ and 
for $q\in \{0,\dots,\nn-1\}$, $t_p^{q+1}$ is the smallest time $t\geq t_p^q$ such that
\begin{equation}\label{eq:rescaling}
\mmin_{
S \subsetneq \NN,
S \neq \emptyset}
\left( \ssum_{i \in S} \ssum_{j\in  \NN \setminus S} \int_{t_p^q}^{t} a_{ij}(s)ds \right) = 1.
\end{equation}
Such a $t$ always exists because $(\NN,\tilde{\EE})$ is strongly connected and therefore for all non empty set $S \subsetneq \NN$, there exists $i\in S$ and $j\in \NN \setminus S$ such that $(j,i)\in \tilde{\EE}$.
Essentially, the sequence $(t_p^0,t_p^1,\ldots,t_p^\nn)$ defines time intervals $[t_p^q,t_p^{q+1}]$
over which
the cumulated influence on any subgroup of agents from the rest of the agents is no less than $1$. 
Let us remark that since we assume that the interaction weights $a_{ij}$ are summable on bounded intervals of $\R^+$, it follows that the sequence $(t_p)_{p\in \N}$ goes to infinity as $p$ goes to $+\infty$.

The interactions over intervals $[t_p^q,t_p^{q+1}]$ induce a chain of movements of the agents toward the center of the group. These movements propagate toward agents having either smallest or largest position in less than $\nn$ such intervals and result in a contraction of the group diameter  between $t_p$ and $t_{p+1}$.
We will prove in Section~\ref{sec:conv} the following proposition which constitutes the main contribution of the paper and is the core of our main result:

\begin{proposition}[Group diameter contraction rate]\label{prop:small-move} If Assumption~\ref{as:persistent-conn} (persistent connectivity) holds, then for all $p \in \N$, 
\begin{equation*}
\Delta_\NN(t_{p+1}) \leq \left(1 - \frac{\rr(t_{p+1})^{-\nn}}{(8n^2)^\nn} \right)
\Delta_\NN(t_p).
\end{equation*}
\end{proposition}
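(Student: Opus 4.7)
Fix $p \in \N$ and assume $\Delta_\NN(t_p) > 0$ (else the statement is trivial). Write $\rho = \rr(t_{p+1})$. Since the dynamics~(\ref{sys}) are linear in $x$, I translate and rescale so that $\min_i x_i(t_p) = 0$ and $\max_i x_i(t_p) = 1$; the claim becomes $\Delta_\NN(t_{p+1}) \le 1 - (8n^2 \rho)^{-\nn}$. The strategy is a barrier-propagation argument running along the $\nn$ sub-intervals $[t_p^q, t_p^{q+1}]$ furnished by the construction~(\ref{eq:rescaling}).

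I first produce an initial barrier at time $t_p$ via pigeonhole: partitioning $[0,1]$ into $2n$ equal sub-intervals of length $1/(2n)$, at most $n$ of them contain an agent, so there exists $\alpha_0 \in (0,1)$ such that $(\alpha_0, \alpha_0 + 1/(2n))$ is agent-free at $t_p$. Set $\beta_0 = 1/(2n)$, $L_0 = \{i : x_i(t_p) \le \alpha_0\}$, $H_0 = \NN \setminus L_0$; both are nonempty, with the minimum agent in $L_0$ and the maximum in $H_0$.

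The core of the proof is the inductive claim: for each $q \in \{0,\ldots,\nn-1\}$ there exist $(L_{q+1}, H_{q+1}, \alpha_{q+1}, \beta_{q+1})$ at time $t_p^{q+1}$, with $\beta_{q+1} \ge \beta_q/(4n\rho)$, such that $x_i(t_p^{q+1}) \le \alpha_{q+1}$ for all $i \in L_{q+1}$ and $x_j(t_p^{q+1}) \ge \alpha_{q+1} + \beta_{q+1}$ for all $j \in H_{q+1}$. The decisive inputs are the cut-weight bound~(\ref{eq:rescaling}) applied to $S = L_q$ combined with the reciprocal-weight bound~(\ref{eq:r-bound}), which give
\begin{equation*}
\int_{t_p^q}^{t_p^{q+1}} \!\!\sum_{i \in L_q,\, j \in H_q} a_{ij}(s)\,ds \ge 1, \qquad \int_{t_p^q}^{t_p^{q+1}} \!\!\sum_{i \in L_q,\, j \in H_q} a_{ji}(s)\,ds \ge \frac{1}{\rho}.
\end{equation*}
I distinguish whether any agent crosses the midline $\alpha_q + \beta_q/2$ during the sub-interval. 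If none does, a direct integration of~(\ref{sys}) shows that the cumulated ascent of $L_q$-agents and descent of $H_q$-agents cannot shrink the gap by more than a factor $4n\rho$, leaving a barrier of width $\ge \beta_q/(4n\rho)$ around a refined threshold. If some agent does cross, it is transferred between $L$ and $H$, and an analogous estimate — using the remaining integrated weight together with the combinatorial $n$-factor needed to isolate the crossing agent — again preserves a gap of width $\ge \beta_q/(4n\rho)$. Because $\min_i x_i$ is non-decreasing and $\max_i x_i$ is non-increasing, neither extremum can cross the midline, so the original min and max remain in $L_q$, $H_q$ respectively throughout the induction.

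The geometric payoff is that after $\nn = \lfloor n/2 \rfloor$ sub-intervals — enough to transmit the motion from the centre of the group to at least one extreme — the global maximum has descended (or the global minimum ascended) by at least $\beta_\nn \ge \beta_0/(4n\rho)^\nn = 1/\bigl[2n(4n\rho)^\nn\bigr] \ge (8n^2\rho)^{-\nn}$, where the last inequality uses $(2n)^{\nn-1}\ge 1$. Undoing the rescaling yields $\Delta_\NN(t_{p+1}) \le (1 - (8n^2\rho)^{-\nn})\,\Delta_\NN(t_p)$, as required.

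\textbf{Main obstacle.} The hard step is the quantitative barrier-propagation: without cut-balancedness, the reciprocal pulling weights in the direction $L_q \to H_q$ are controlled only by $1/\rho$, which may be arbitrarily small, so one must carefully integrate these against the current barrier width and handle the bookkeeping of potentially several simultaneous crossings within a single sub-interval. The $1/(4n\rho)$ per-step contraction factor arises from combining the reciprocal cut-weight bound with the combinatorial choice among the $n$ agents of which carries the bulk of the integrated pull.
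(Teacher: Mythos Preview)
Your proposal has a genuine logical gap at the final step. Your inductive claim is that at time $t_p^{q+1}$ there exists a barrier of width $\beta_{q+1}\ge \beta_q/(4n\rho)$ separating some partition $L_{q+1}\cup H_{q+1}=\NN$. Granting this, after $\nn$ steps you have a barrier of width $\beta_{\nn}$ somewhere in the group. But a barrier somewhere does \emph{not} imply that $y_1$ has risen or $y_n$ has dropped by $\beta_{\nn}$: the gap could sit in the middle of the group, with both extremes untouched. Your sentence ``enough to transmit the motion from the centre of the group to at least one extreme'' is precisely the step that needs a mechanism, and your induction supplies none. Nothing in your setup forces $|L_q|$ (or any comparable index) to decrease, so there is no counter that terminates at an extreme after $\nn$ steps.

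The paper's proof closes exactly this gap by working in the sorted system $y$ and tracking the \emph{index} $l$ at which the gap sits. The key technical tool is a Hendrickx--Tsitsiklis style weighted sum $\sum_{i=1}^{l} R^{-i}y_i$ (Lemma~\ref{l:adaptation_hendrickx}), whose derivative is bounded below using the integrated cut weight. This yields Lemma~\ref{l:diameter-ind}: a gap at index $l$ at time $\tau\in[t_p,t_p^q]$ forces, by time $t_p^{q+1}$, either a quantified rise of $y_1$, or a new gap at a \emph{strictly smaller} index $l'<l$ with width shrunk by at most $8n^2\rho^{l-l'}$. Since the initial pigeonhole gap can be taken at $l^0\le\nn$ (or symmetrically $l^0\ge\nn+1$, propagating toward $y_n$), the index can decrease at most $\nn-1$ times before the $y_1$-moves alternative must fire. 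That index-decrease is the missing ingredient in your sketch; without it, your barrier can persist indefinitely without ever reaching an endpoint. A secondary concern: your ``no-crossing'' estimate needs a lower bound on $y_j-y_i$ across the cut, but agents on both sides may sit arbitrarily close to the midline, so the claimed $\beta_q/(4n\rho)$ does not follow from a direct integration of~(\ref{sys}) as stated.
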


It is clear from the previous proposition that the sequence $({\rr(t_{p})})_{p\in \N}$ plays a central role in the fact that the consensus is reached or not. This is where our second assumption regarding the interaction weights comes into play.
\begin{assumption}[Slow divergence of reciprocal interaction weights]\label{as:weak-sym}
For all $t\geq 0$, $\rr(t)$ is finite and the infinite sum
$
\sum_{p\in\N} \rr(t_p)^{-\nn}=+\infty.
$
\end{assumption}

The assumption requires $\rr(t)$ not to grow too fast. For instance, $\rr(t_p) = O(p^{2/n})$ (which includes the case where $\rr$ is bounded) satisfies Assumption~\ref{as:weak-sym}, whereas $\rr(t_p) = p^{4/n}$ does not. Hence, the assumption enables the divergence of reciprocal interaction weights provided this divergence is slow. Let us remark that the larger the number of agents, the slower the divergence can be.
We can now state the main result of the paper.
\begin{theorem}\label{th:sym-cont}
If Assumptions~\ref{as:persistent-conn} (persistent connectivity) and~\ref{as:weak-sym} (slow divergence of reciprocal interaction weights) hold, then the trajectory of system~(\ref{sys}) reaches a consensus.
\end{theorem}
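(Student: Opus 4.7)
The plan is to deduce Theorem~\ref{th:sym-cont} from Proposition~\ref{prop:small-move} by a standard telescoping argument along the rescaled time sequence $(t_p)_{p\in\N}$. Since the proposition gives a geometric-type contraction of the group diameter over each interval $[t_p,t_{p+1}]$, the only question is whether the cumulative contraction drives $\Delta_\NN$ to zero, and this is precisely what Assumption~\ref{as:weak-sym} is designed to ensure.

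First, I would iterate Proposition~\ref{prop:small-move} from $p=0$ to $p-1$ to obtain
\begin{equation*}
\Delta_\NN(t_p) \leq \Delta_\NN(0) \prod_{k=0}^{p-1}\left(1 - \frac{\rr(t_{k+1})^{-\nn}}{(8n^2)^\nn}\right).
\end{equation*}
Since each factor lies in $[0,1]$ (using $\rr(t_{k+1})\geq 1$ and $n\geq 1$), I would apply the elementary inequality $1-x\leq e^{-x}$ for $x\in[0,1]$ to obtain
\begin{equation*}
\Delta_\NN(t_p) \leq \Delta_\NN(0) \, \exp\!\left(-\frac{1}{(8n^2)^\nn}\ssum_{k=0}^{p-1} \rr(t_{k+1})^{-\nn}\right).
\end{equation*}

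Next, I would invoke Assumption~\ref{as:weak-sym}: since $\rr$ is non-decreasing and $\sum_{p\in\N}\rr(t_p)^{-\nn} = +\infty$, the partial sums above diverge as $p\to +\infty$, so $\Delta_\NN(t_p)\to 0$. Combining this with the monotonicity of $\Delta_\NN$ (recalled just after equation~(\ref{eq:diameter})) and the fact, noted in the paper just before Proposition~\ref{prop:small-move}, that $t_p\to +\infty$, I conclude that $\Delta_\NN(t)\to 0$ as $t\to +\infty$.

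Finally, to conclude that a common limit exists, I would argue as follows: $\max_{i\in\NN} x_i(t)$ is non-increasing and bounded below by $\min_{j\in\NN} x_j(0)$, hence converges to some limit $M$; symmetrically, $\min_{j\in\NN} x_j(t)$ converges to some limit $m$; and $M-m = \lim_{t\to+\infty}\Delta_\NN(t) = 0$. Since every $x_i(t)$ is squeezed between $\min_j x_j(t)$ and $\max_j x_j(t)$, all $x_i(t)$ converge to the common value $M=m$, i.e.\ the trajectory reaches a consensus. There is no real obstacle here since the proposition does all the work; the only thing to be careful about is verifying that each factor $1-\rr(t_{k+1})^{-\nn}/(8n^2)^\nn$ is indeed in $[0,1]$ so that the $e^{-x}$ bound is justified, which holds because $(8n^2)^\nn \geq 1$ and $\rr(t_{k+1})^{-\nn}\leq 1$.
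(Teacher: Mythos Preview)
Your proposal is correct and follows essentially the same approach as the paper: iterate Proposition~\ref{prop:small-move} to get a product bound on $\Delta_\NN(t_p)$, then use Assumption~\ref{as:weak-sym} to conclude the product tends to zero. The paper simply invokes the standard equivalence between $\prod_p(1-c_p)\to 0$ and $\sum_p c_p=+\infty$ (for $c_p\in[0,1)$), whereas you make this explicit via $1-x\le e^{-x}$, and you also spell out the final squeeze argument that the paper leaves implicit; these are cosmetic differences, not substantive ones.
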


\begin{proof}
From Proposition~\ref{prop:small-move} and using a simple induction we can show that for all $P\ge 1$,
\begin{equation}
\label{eq:diam}
\Delta_\NN(t_{P}) \leq \prod_{p=1}^P\left(1 -\frac{\rr(t_{p+1})^{-\nn}}{(8n^2)^\nn} \right)
\Delta_\NN(0).
\end{equation}
Then, the product in the right-hand side of the inequality converges to $0$ when $P$ goes to infinity if and only if $\sum_{p\in\N} \rr(t_p)^{-\nn}=+\infty$.
This last statement is true according to Assumption~\ref{as:weak-sym}. Consequently, the group diameter goes to zero and the consensus is achieved.
\end{proof}

We would like to point out that not only Proposition~\ref{prop:small-move} allows us to prove that the consensus is reached, but it also provides an estimate of the convergence rate to the consensus value.
In the following section, our  assumptions are discussed in more details. 

\section{Discussion on the Assumptions}\label{sec:as}

\subsection{Persistent connectivity}\label{sec:as1}
First, let us show why Assumption~\ref{as:persistent-conn} is more suitable for the continuous-time consensus protocol~(\ref{sys}) than the notion of ultimate connectivity, which is often used for the discrete-time protocols in~\cite{Moreau2005,Blondel2005}. Formally, ultimate connectivity is defined as follows:
\begin{assumption}[Ultimate connectivity]\label{as:ultimate-conn}
For all $t\ge 0$, the graph $G(t) = (\NN,\bigcup_{s\geq t} \EE(s))$ is strongly connected where $\EE(s) \subseteq \NN\times\NN$ is the set of directed interaction links such that $(j,i) \in \EE(s)$ if and only if $a_{ij}(s)>0$.
\end{assumption}

Using a counterexample, we show that a continuous-time system which respects the ultimate connectivity assumption and bears bounded and symmetric interaction weights does not always converge to consensus. This implies that the result established in~\cite{Moreau2005,Blondel2005} for discrete-time consensus protocols cannot be directly transposed to the continuous-time case.
Consider a system of form~(\ref{sys}) of two agents with $x_1(0)< x_2(0) $ and with interaction weights 
\begin{equation*}
\forall t \in \R^+,\; a_{12}(t) = a_{21}(t) = \left\{
\begin{array}{ll}
1 \text{ if } \exists k \in \N, t \in [k,k+\frac{1}{2^{k+1}}] \\
0 \text{ otherwise}
\end{array}
\right.
\end{equation*}
The weights are bounded and symmetric. Under these conditions, Theorem~\ref{th:existence-uniqueness} provides existence and uniqueness of a solution to the above differential equation understood in the sense of Carath\'eodory differential equation. Also, it is clear that Assumption~\ref{as:ultimate-conn} holds.
Now, solving the differential equation~(\ref{sys}), we obtain for all $k\in \N$, 
$$
x_2(k+1)-x_1(k+1)=e^{-2 \int_k^{k+1} a_{12}(s)ds
}(x_2(k)-x_1(k)) =e^{-\frac{1}{2^k}} (x_2(k)-x_1(k)),
$$
which yields for all $k\in \N$,
$$
x_2(k)-x_1(k) =  \pprod_{h=0}^{k-1} e^{-\frac{1}{2^h}} (x_2(0)-x_1(0))
 = e^{-\frac{1-1/2^k}{1-1/2}}(x_2(0)-x_1(0)) \ge e^{-2}(x_2(0)-x_1(0)).
$$
This proves that the system does not reach a consensus. Notice that this could be alleviated by forcing $\int_0^t a_{12}(s)ds$ to diverge toward $+\infty$ when $t$ tends to $+\infty$ which would correspond to Assumption~\ref{as:persistent-conn}. Hence, this example explains why persistent connectivity is more relevant than ultimate connectivity for continuous-time consensus.

\subsection{Slow divergence of reciprocal interaction weights}\label{sec:as2}
\label{sec:tight-example}

We present a simple example for which Assumption~\ref{as:weak-sym} is tight.
Let $(\ratio_p)_{p\in \N}$ be a non-decreasing sequence such that $\ratio_p\ge 1$, for all $p\in \N$.
Let us consider a multi-agent system with $3$ agents where $x_1(0) < x_2(0) <x_3(0)$ and with the dynamics given by
$$
\left\{
\begin{array}{l}
\dot{x}_1(t) = x_2(t) - x_1(t) \\
\dot{x}_2(t) = \ratio_p(x_1(t) - x_2(t)) \\
\dot{x}_3(t) = 0 
\end{array}
\right., \; \text{ if } t\in [2p,2p+1)
$$
and
$$
\left\{
\begin{array}{l}
\dot{x}_1(t) = 0 \\
\dot{x}_2(t) = \ratio_p(x_3(t) - x_2(t)) \\
\dot{x}_3(t) = x_2(t)-x_3(t)
\end{array}
\right., \; \text{ if } t\in [2p+1,2p+2).
$$


This system satisfies the persistent connectivity Assumption~\ref{as:persistent-conn} (with $(\NN,\tilde{\EE})$ being the undirected line graph). 
The sequence $(t_p)_{p\in \N}$ acting as a rescaling of time and defined by equation (\ref{eq:rescaling}) is given  for $p \in \N$ by,
$t_p = 2p$, and $\rr(t_{p})=\ratio_{p}$.
Then, Assumption~\ref{as:weak-sym} holds if and only if
$\sum_{p\in\N} {\ratio_{p}}^{-1}=+\infty$.
The validity of Assumption~\ref{as:weak-sym} hence depends on the choice of sequence $(\ratio_p)_{p\in \N}$. In this specific case, we can show that Assumption~\ref{as:weak-sym} is necessary and sufficient for the trajectory of the system to reach a consensus. We know from Theorem~\ref{th:sym-cont} that it is a sufficient condition. 
Then, let us show that if $\sum_{p\in\N} {\ratio_{p}}^{-1}<+\infty$ then the consensus cannot be reached.

It is easy to see that $x_1(t)$ is non-decreasing, $x_3(t)$ is non-increasing and for all $t\ge 0$, $x_1(t)\le x_2(t) \le x_3(t)$. Integrating the dynamics of the system, we can show that for all $p\in \N$:
$$
\left\{
\begin{array}{lll}
x_1(2p+1) &=& (1-\lambda_p) x_1(2p) + \lambda_p x_2(2p) \\
x_2(2p+1) &=& \ratio_p\lambda_p x_1(2p) + (1-\ratio_p\lambda_p) x_2(2p) \\
x_3(2p+1) &=& x_3(2p)
\end{array}
\right.
$$
and
$$
\left\{
\begin{array}{lll}
x_1(2p+2) &=& x_1(2p+1) \\
x_2(2p+2) &=& (1-\ratio_p\lambda_p) x_2(2p+1) + \ratio_p\lambda_p x_3(2p+1) \\
x_3(2p+2) &=& \lambda_p x_2(2p+1) + (1-\lambda_p) x_3(2p+1)
\end{array}
\right.
$$
where $\lp = \frac{1-e^{-(\ratio_p+1)}}{1+\ratio_p}$.
Then, let us remark that
\begin{eqnarray*}
x_3(2p+2) &=& \lambda_p x_2(2p+1) + (1-\lambda_p) x_3(2p+1)\\
& \ge & \lambda_p x_1(2p+1) + (1-\lambda_p) x_3(2p+1) \ge
 \lambda_p x_1(2p) + (1-\lambda_p) x_3(2p). 
\end{eqnarray*}
Also,
$$
x_1(2p+2) = x_1(2p+1) = (1-\lambda_p) x_1(2p) + \lambda_p x_2(2p) \le
(1-\lambda_p) x_1(2p) + \lambda_p x_3(2p). 
$$
Therefore,
$$
x_3(2p+2)-x_1(2p+2) \geq (1-2\lp)(x_3(2p)-x_1(2p)).
$$
By induction on the previous equation, we obtain for all $P\in \N$,
\begin{equation}
\label{eq:min}
x_3(2P)-x_1(2P) \ge \pprod_{p=0}^{P-1} (1-2\lambda_p)  (x_3(0)-x_1(0)).
\end{equation}
If $\sum_{p\in\N} {\ratio_{p}}^{-1}<+\infty$, then necessarily the limit of $\ratio_p$ is infinite.
Then, $\lambda_p \le {\ratio_p}^{-1}$, and therefore $\sum_{p \in \N} \lambda_p < +\infty$.
This implies that the left-hand side of the inequality (\ref{eq:min}) does not tend to zero and the trajectory of the system cannot not reach a consensus.
Therefore, for that particular system, Assumption~\ref{as:weak-sym} is a necessary and sufficient condition for consensus. 

We also realized a numerical study of an extension of the previous example. Let us consider a system with $n=2m+1$ agents with $m \ge 2$ whose dynamics is defined as follows:
\begin{itemize}
\item For $t\in [(m+1)p+i,(m+1)p+i+1)$ with $p\in \N$, $i\in \{0,\dots,m-2\}$,
$$
\left\{
\begin{array}{llll}
\dot{x}_{i+1}(t)   &= & x_{i+2}(t) - x_{i+1}(t) \\
\dot{x}_{i+2}(t)   &= & \ratio_p(x_{i+1}(t) - x_{i+2}(t)) \\
\dot{x}_{n-1-i}(t) &= &\ratio_p(x_{n-i}(t) - x_{n-1-i}(t)) \\
\dot{x}_{n-i}(t) &= &x_{n-1-i}(t) - x_{n-i}(t) \\
\dot{x}_j(t) &=& 0 & \text{ if } j\notin\{i+1,i+2,n-1-i,n-i\}
\end{array}
\right.
$$
\item For $t\in [(m+1)p+m-1,(m+1)p+m)$ with $p\in \N$,
$$
\left\{
\begin{array}{llll}
\dot{x}_{m}(t)   &= & x_{m+1}(t) - x_{m}(t) \\
\dot{x}_{m+1}(t)   &= & \ratio_p(x_{m}(t) - x_{m+1}(t)) \\
\dot{x}_j(t) &=& 0 & \text{ if } j\notin\{m,m+1\}
\end{array}
\right.
$$
\item For $t\in [(m+1)p+m,(m+1)(p+1))$ with $p\in \N$,
$$
\left\{
\begin{array}{llll}
\dot{x}_{m+1}(t)   &= & \ratio_p(x_{m+2}(t) - x_{m+1}(t)) \\
\dot{x}_{m+2}(t)   &= & (x_{m+1}(t) - x_{m+2}(t)) \\
\dot{x}_j(t) &=& 0 & \text{ if } j\notin\{m+1,m+2\}
\end{array}
\right.
$$
\end{itemize}
Regarding the initial conditions, we set $x_j(0) = -1$ for $i \in \{0,\ldots,m-1\}$, $x_m(0) = 0$ and $x_i(0) = 1$ for $i \in \{m+1,\ldots,2m\}$.
This system satisfies the persistent connectivity Assumption~\ref{as:persistent-conn}. 
We can also show that we have $\rr(t_{p+1})=\ratio_{p}$, for $p\in \N$.
Then, Assumption~\ref{as:weak-sym} holds if and only if
$\sum_{p\in\N} {\ratio_{p}}^{-m}=+\infty$. In the following, we show the results of our numerical simulations of the system with $11$ agents (i.e. $m=5$).
We simulated the system for three different sequences $(\ratio_p)_{p\in \N}$: $\ratio_p=1$, $\ratio_p=(1+p)^{\frac{1}{5}}$, $\ratio_p=(1+p)^{\frac{2}{5}}$. 
It should be noted that Assumption~\ref{as:weak-sym} holds for the first two sequences but not for the third one. The results of the simulations are shown in Figure~\ref{fig} where we represented the evolution of the diameter $\Delta_\NN(t)$ over time. The simulations are consistent with the theory showing that the consensus is reached for the first two sequences (the diameter goes to zero). Also, for the third sequence, we can observe that the consensus is not reached. For the first two sequences, we also represented the evolution of the diameter $\Delta_\NN(t)$ in a logarithmic scale in order to estimate the convergence rate. It appears clearly that, for the first sequence, the convergence rate is exponential. For the second sequence, the convexity of the curve indicates that the convergence rate is sub-exponential.

\begin{figure}[!t]
\begin{center}
\includegraphics[width=0.65\columnwidth]{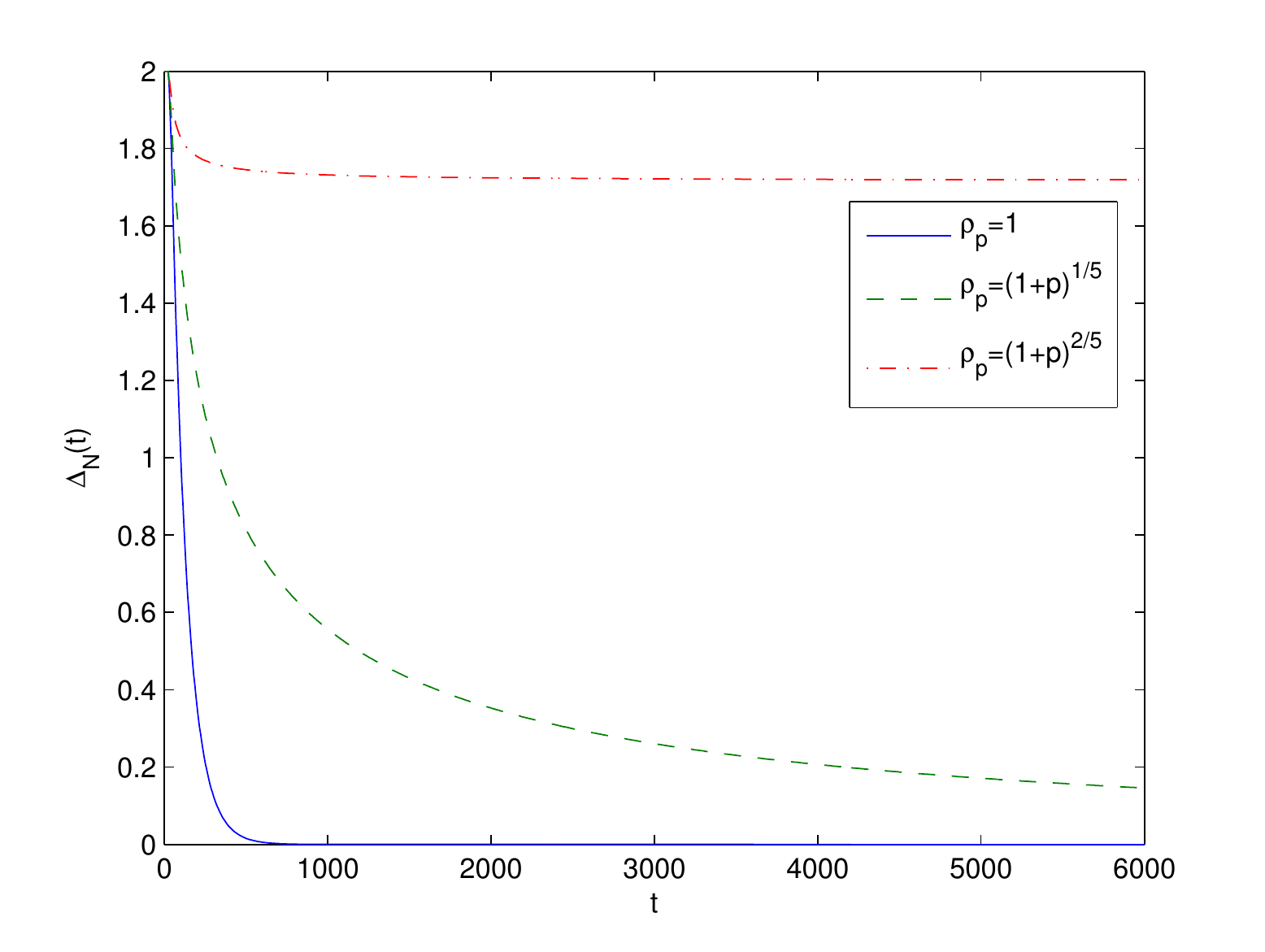}  \\
\includegraphics[width=0.48\columnwidth]{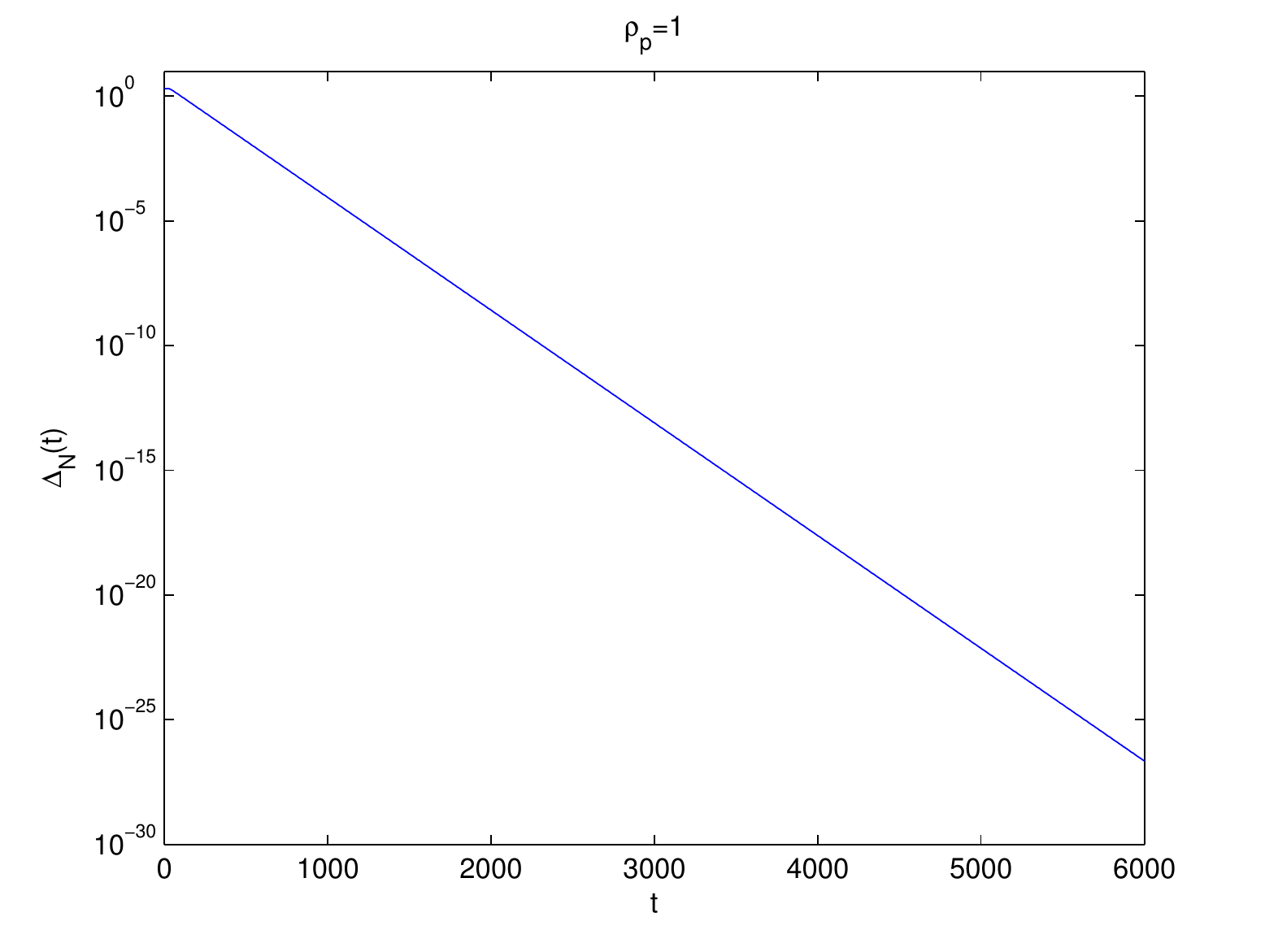}  
\includegraphics[width=0.48\columnwidth]{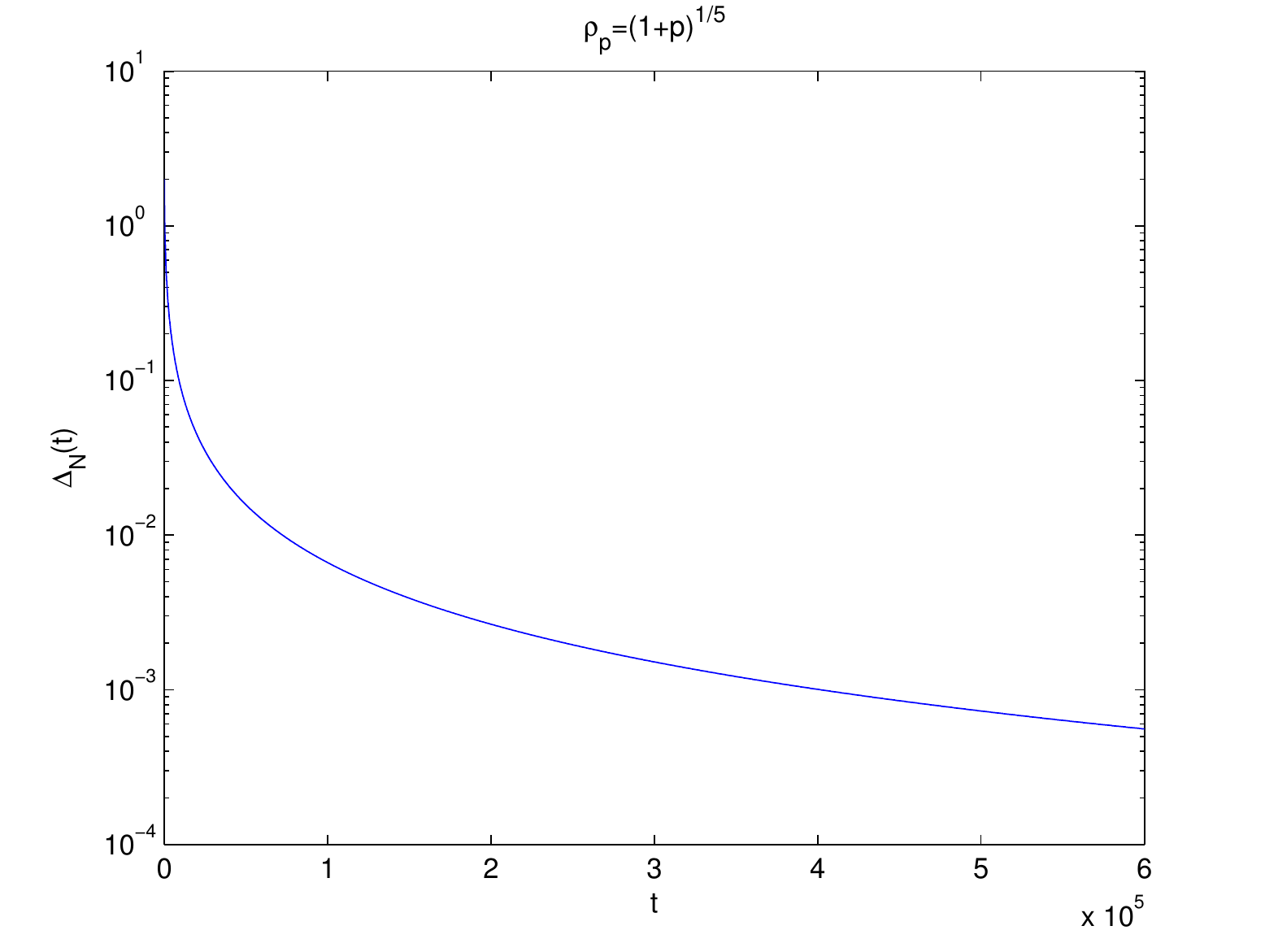}  
\caption{Top: evolution of the diameter $\Delta_\NN(t)$ for the system with $11$ agents for the sequences given by $\ratio_p=1$ (plain line), $\ratio_p=(1+p)^{\frac{1}{5}}$ (dashed line), $\ratio_p=(1+p)^{\frac{2}{5}}$ (dashes and dots). Bottom: evolution of the diameter $\Delta_\NN(t)$ in logarithmic scale for the sequences given by $\ratio_p=1$ (left), $\ratio_p=(1+p)^{\frac{1}{5}}$ (right).
}  
\label{fig}                                 
\end{center}                                 
\end{figure}

\subsection{Related results}\label{sec:related-results}
Hendrickx and Tsitsiklis proved recently~\cite{Hendrickx2011} that consensus is achieved provided that Assumption~\ref{as:persistent-conn} is satisfied along with the following assumption:
\begin{assumption}[Cut-balanced weights]\label{as:h}
There exists $K\geq 1$ such that for all non-empty strict subset $S \subseteq \NN$ and for all time $t\ge 0$, 
$$K^{-1}  \ssum_{i \in S, j\notin S} a_{ji}(t) \leq \ssum_{i \in S, j\notin S} a_{ij}(t) \leq K \ssum_{i \in S, j\notin S} a_{ji}(t).$$
\end{assumption}
Assumption~\ref{as:h} generalizes both symmetric weights ($a_{ij}(t) = a_{ji}(t)$ for $i,j \in \NN$) and balanced weights ($\sum_{j\ne i} a_{ij}(t) = \sum_{j\ne i} a_{ji}(t)$ for $i \in \NN$). In particular, the result from~\cite{Hendrickx2011} includes those from~\cite{Cao2011} (using strongly symmetric weights) and~\cite{Saber2004} (using balanced weights).

Let us assume that Assumptions~\ref{as:persistent-conn} and~\ref{as:h} are satisfied. Then, according to the definition of the maximal ratio between reciprocal interaction weights given by~(\ref{eq:r}) and~(\ref{eq:rmax}), $\rr(t) \le K$ for all $t\in \R^+$ which gives that $\rr(t)$ is bounded, thus Assumption~\ref{as:weak-sym} is satisfied. 
Therefore, Theorem~\ref{th:sym-cont} states that trajectories of system~(\ref{sys}) reach a consensus.
Hence, the convergence result in~\cite{Hendrickx2011}
can be regarded as a particular case of Theorem~\ref{th:sym-cont}. 

Let us remark that Theorem~\ref{th:sym-cont} is more general though. Indeed, looking at the example presented in section~\ref{sec:tight-example}, it is clear that Assumption~\ref{as:h} may not be satisfied even though consensus is reached (i.e. if $\ratio_p=p$, for all $p\in \N$). In that case, the result in~\cite{Hendrickx2011} does not allow us to conclude that consensus is reached while Theorem~\ref{th:sym-cont} does.

Also, there is no estimation of the rate of convergence to the consensus in~\cite{Hendrickx2011} whereas Proposition~\ref{prop:small-move}
allows us to state the following result:
\begin{proposition} Let Assumptions~\ref{as:persistent-conn} (persistent connectivity) and~\ref{as:h} (cut-balanced weights) hold. Let $P : \R^+ \rightarrow \N$ be the function defined for all $t\in \R^+$ by
$P(t)=p$ if $t\in [t_p,t_{p+1})$. Then,
$$
\forall t\in \R^+,\;
\Delta_\NN(t) \leq \left(1 -\frac{1}{(8Kn^2)^\nn} \right)^{P(t)}
\Delta_\NN(0).
$$
\end{proposition}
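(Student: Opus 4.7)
The plan is to reduce the claim to Proposition~\ref{prop:small-move} via a uniform bound on $\rr(t)$ coming from the cut-balanced assumption, and then exploit the monotonicity of $\Delta_\NN$ to extend the estimate from the discrete grid $(t_p)_{p\in\N}$ to arbitrary $t\in \R^+$.

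First I would observe that Assumption~\ref{as:h} directly bounds the ratio $r_S(t)$ defined just before equation~(\ref{eq:r}): for every non-empty proper $S\subsetneq \NN$ and every $t\ge 0$, the inequalities of Assumption~\ref{as:h} give $r_S(t)\le K$. Taking the maximum over $S$ and then the supremum over $s\in[0,t]$ yields $\rr(t)\le K$ for all $t\in\R^+$. In particular $\rr(t_{p+1})^{-\nn}\ge K^{-\nn}$, so Proposition~\ref{prop:small-move} specializes to
\begin{equation*}
\Delta_\NN(t_{p+1}) \leq \left(1-\frac{1}{(8Kn^2)^\nn}\right)\Delta_\NN(t_p)\qquad\text{for all }p\in\N.
\end{equation*}

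A straightforward induction on $p$, starting from $p=0$, then gives $\Delta_\NN(t_p)\le \bigl(1-1/(8Kn^2)^\nn\bigr)^{p}\Delta_\NN(0)$ for every $p\in\N$. To conclude for arbitrary $t$, I would invoke the monotonicity fact already used in the paper: $\max_{i}x_i(t)$ is non-increasing and $\min_{j}x_j(t)$ is non-decreasing, so $\Delta_\NN$ is non-increasing in $t$. For $t\in[t_p,t_{p+1})$ we thus have $\Delta_\NN(t)\le \Delta_\NN(t_p)$, and combining this with the inductive bound and the definition $P(t)=p$ yields the desired inequality.

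There is essentially no hard step here: the whole argument is a routine combination of Proposition~\ref{prop:small-move}, the bound $\rr(t)\le K$, and monotonicity of $\Delta_\NN$. The only point that deserves attention is to verify carefully that Assumption~\ref{as:h} translates into $r_S(t)\le K$ uniformly in $S$ and $t$ (so that the supremum defining $\rr$ remains bounded), after which the rate estimate follows mechanically.
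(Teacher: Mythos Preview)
Your proposal is correct and follows essentially the same route as the paper: bound $\rr(t)\le K$ using Assumption~\ref{as:h}, feed this into Proposition~\ref{prop:small-move} (the paper invokes the already-inducted form~(\ref{eq:diam}), you induct directly), and then use the monotonicity of $\Delta_\NN$ to pass from $t_p$ to arbitrary $t$. There is no substantive difference.
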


\begin{proof} Using the fact that  for all $t\in \R^+$ $\rr(t) \le K$, equation (\ref{eq:diam}) 
gives for all $P \ge 1$,
$$
\Delta_\NN(t_{P}) \leq \prod_{p=1}^{P} \left(1 -\frac{1}{(8Kn^2)^\nn} \right)
\Delta_\NN(0)= \left(1 -\frac{1}{(8Kn^2)^\nn} \right)^{P}
\Delta_\NN(0).
$$
Then, since $\Delta_\NN$ is non-increasing, we have for all $t\in \R^+$, $\Delta_\NN (t) \le \Delta_\NN(t_{P(t)})$
which allows us to conclude.
\end{proof}

In particular, the previous proposition states that if $P(t)$ grows linearly,
then the consensus is approached at an exponential rate. On the other hand, if $P(t)$ grows logarithmically, 
then the consensus is approached only at polynomial rate.
This can be illustrated using the system with $2$ agents with symmetric interaction weights $a_{12}(t) = a_{21}(t) = 1/t$.
For that system, Assumptions~\ref{as:persistent-conn}  and~\ref{as:h}  hold. We can show that, for $p\in \N$, $t_p=e^p$ which gives
$P(t)=\lfloor \ln(t) \rfloor$. Hence, the previous proposition states that the consensus is reached at polynomial rate which is indeed the case
since the direct resolution of the system gives $\Delta_\NN(t)=\Delta_\NN(0)/{t}$.

Another related result was established by Moreau in~\cite{Moreau2004} where it is shown that under the following assumption, trajectories of system~(\ref{sys}) reach a consensus :
\begin{assumption}[Connectivity of integral $\delta$-digraph]\label{as:delta-conn}
The weights $a_{ij}$ are bounded and piecewise continuous. There exists $\delta>0$ and interval length $T>0$ such that for all $t>0$, the graph $(\NN,\EE_{\delta,T}(t))$ has a spanning tree where 
$$\EE_{\delta,T}(t) = \left\{(j,i) \in \NN \times \NN \; | \; \int_t^{t+T}a_{ij}(s)ds > \delta \right\}.$$
\end{assumption}
Unlike our assumption, the previous one does not require strongly connected interaction graphs. However, the connectivity is required on every interval of constant length $T$. 
To illustrate the difference, we consider again the system with $2$ agents with symmetric interaction weights $a_{12}(t) = a_{21}(t) = 1/t$. Let $\delta>0$, $T>0$ and $t>0$,
we have
$$\int_t^{t+T}a_{12}(s)ds > \delta \iff \ln\left(1+\frac{T}{t}\right) > \delta  \iff t < \frac{T}{e^\delta -1}.$$
Thus, the graph $(\NN,\EE_{\delta,T}(t))$ will be disconnected for times $t$ greater than $\frac{T}{e^\delta -1}$. This invalidates Assumption~\ref{as:delta-conn}. On the other hand, 
Theorem~\ref{th:sym-cont} allows us to conclude that the trajectories of the system reach a consensus whereas this conclusion could not be obtained from \cite{Moreau2004}.

\section{Estimation of the Group Diameter Contraction Rate}\label{sec:conv}

In this section, we give the proof of Proposition~\ref{prop:small-move}.
It is not directly proved for system (\ref{sys}) but for an equivalent system which preserves the relative order of the agents positions.

\subsection{Equivalent order-preserving system}\label{sec:order}
The manipulation of the trajectory $x$ of system (\ref{sys}) would be much easier if the positions of agents always remained in the same order. Unfortunately, this is generally not the case. However, we can imagine an equivalent trajectory $y$ where whenever the order of the positions $x_i$ and $x_j$ of two agents changes, the agents exchange their labels so that the order actually remains unchanged in $y$. This idea of label reordering was also used in~\cite{Hendrickx2011} where the authors managed to show several interesting properties of the sorted trajectory.
We first define the reordering permutation.
\begin{definition}
\label{def:sigma}
For $t\in \R^+$, let $\sigma_t$ be the permutation of $\NN$ verifying for all $i,j \in \NN$,
\begin{center}
$i<j \:\: \Rightarrow  \:\: x_{\sigma_t(i)}(t) < x_{\sigma_t(j)}(t)$  or  $(x_{\sigma_t(i)}(t) = x_{\sigma_t(j)}(t)$ and $\sigma_t(i) < \sigma_t(j) )$.
\end{center}
\end{definition}

This permutation sorts the sequence $(x_i(t),i)$ in lexicographic order.
This leads to defining an order-preserving trajectory $y$ associated to $x$ as follows :
\begin{equation}\label{eq:y}
\forall t \in \R^+, \forall i \in \NN,\: y_i(t) = x_{\sigma_t(i)}(t).
\end{equation}
Several properties of $y$ are straightforward consequences of this definition:
\begin{proposition}\label{prop:order}
Let $y$ be given by (\ref{eq:y}), then 
\begin{itemize}
\item For all $t\in \R^+$, for all $i,j \in \NN$, $i<j \implies y_i(t) \leq y_j(t)$,
\item The functions $y_1$ and $y_n$ are respectively non-decreasing and non-increasing,
\item For all $t\in \R^+$, $\Delta_\NN(t)= y_n(t)-y_1(t)$.
\end{itemize}
\end{proposition}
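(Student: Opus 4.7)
The plan is to derive each of the three bullet points directly from Definition~\ref{def:sigma}, the defining equation~(\ref{eq:y}), and the monotonicity of $\max_i x_i$ and $\min_j x_j$ already recalled just after~(\ref{eq:diameter}). The statement is really a bookkeeping lemma whose purpose is to let one work with the sorted trajectory $y$ in place of $x$ in the rest of Section~\ref{sec:conv}, so none of the three items should require more than a couple of lines.

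First, for the ordering property, I would observe that Definition~\ref{def:sigma} defines $\sigma_t$ as exactly the permutation that sorts the sequence $(x_i(t),i)_{i\in\NN}$ in lexicographic non-decreasing order, the tie-breaking by $\sigma_t(i)<\sigma_t(j)$ making this permutation unique. Hence for $i<j$ the definition yields either $x_{\sigma_t(i)}(t) < x_{\sigma_t(j)}(t)$ or $x_{\sigma_t(i)}(t) = x_{\sigma_t(j)}(t)$, and in either case $y_i(t)=x_{\sigma_t(i)}(t)\le x_{\sigma_t(j)}(t)=y_j(t)$.

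Second, for the monotonicity statement, I would use the first bullet applied with $i=1$ and $i=n$ to identify $y_1(t)=\min_{i\in\NN}x_i(t)$ and $y_n(t)=\max_{i\in\NN}x_i(t)$ for every $t\in\R^+$. The fact, stated in the paragraph right after equation~(\ref{eq:diameter}), that $\min_i x_i$ is non-decreasing and $\max_i x_i$ is non-increasing along any trajectory of~(\ref{sys}) then yields at once that $y_1$ is non-decreasing and $y_n$ non-increasing. Third, combining the same identification with the definition~(\ref{eq:diameter}) of the group diameter gives $\Delta_\NN(t)=\max_i x_i(t)-\min_j x_j(t)=y_n(t)-y_1(t)$, which closes the proof.

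There is no genuine obstacle here; the only mildly delicate point is checking that the lexicographic ordering of the pairs $(x_i(t),i)$ really is total, which guarantees that $\sigma_t$ is well defined and unique for every $t\in\R^+$, and this is immediate because the second coordinate of those pairs runs over the distinct integers $1,\ldots,n$.
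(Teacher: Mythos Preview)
Your proposal is correct and follows essentially the same line as the paper's own proof: derive the ordering from Definition~\ref{def:sigma} and~(\ref{eq:y}), then identify $y_1$ and $y_n$ with $\min_j x_j$ and $\max_i x_i$ (the paper explicitly invokes that $\sigma_t$ is a bijection here, which you leave implicit), and finally read off the diameter formula and the monotonicity from the remarks after~(\ref{eq:diameter}).
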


\begin{proof} The first property is direct consequence of (\ref{eq:y}) and Definition~\ref{def:sigma}.
This property and the fact that $\sigma_t$ is a bijection then gives:
$$
y_1(t)=\min_{j\in \NN} y_j(t)=\min_{j\in \NN} x_{j}(t) \text{ and } y_n(t)=\max_{i\in \NN} y_i(t)= \max_{i\in \NN} x_{i}(t).
$$
The fact that $\min_{j\in \NN} x_{j}(t)$ and $\max_{i\in \NN} x_{i}(t)$ are respectively non-decreasing and non-increasing then gives the second property. Finally, we also have 
$$
\Delta_\NN(t)= \max_{i\in \NN} x_{i}(t)- \min_{j\in \NN} x_{j}(t) =y_n(t)-y_1(t).
$$
\end{proof}


We also define the interaction weights $b_{ij}$ for the order preserving trajectory $y$, given at time $t\in \R^+$ by
$b_{ij}(t) = a_{\sigma_t(i)\sigma_t(j)}(t)$.
The following result, established in~\cite{Hendrickx2011}, 
shows that $y$ is solution of a Carath\'eodory differential equation 
(even in the presence of an infinite number of position exchanges in finite time in the original trajectory $x$) and 
justifies the appellation of equivalent order-preserving system.
\begin{proposition}[{\cite{Hendrickx2011}}]\label{prop:equiv}
For all $t \in \R^+$, $y$ satisfies the equation
\begin{equation}\label{sys:equiv}
y_i(t) = y_i(0) + \int_0^t \ssum_{j\in \NN} b_{ij}(s) (y_j(s) - y_i(s)) ds,\; i\in \NN.
\end{equation}
\end{proposition}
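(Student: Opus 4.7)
My plan is to prove (\ref{sys:equiv}) by showing each $y_i$ is locally absolutely continuous and then identifying its derivative at almost every time; the integral identity then follows from the fundamental theorem of calculus.

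First I would establish local absolute continuity via the order-statistic representation
\begin{equation*}
y_i(t) = \max_{S \subseteq \NN,\,|S|=i}\;\min_{k\in S} x_k(t).
\end{equation*}
Since each $x_k$ is locally absolutely continuous as a Carath\'eodory solution of (\ref{sys}), and the operations $\min$ and $\max$ over finitely many entries preserve local absolute continuity (they are $1$-Lipschitz), so is $y_i$. In particular $y_i$ is differentiable almost everywhere and recoverable by integration from $\dot y_i$.

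The more delicate step is to identify $\dot y_i(s)$ almost everywhere. I would work with the partial sum
\begin{equation*}
T_i(t) := y_1(t)+\cdots+y_i(t) = \min_{S\subseteq \NN,\,|S|=i}\;\sum_{k\in S} x_k(t),
\end{equation*}
which is again locally absolutely continuous. The key observation is that for any subset $S$ of size $i$ optimal for $T_i$ at $s$, the non-negative function $g_S(t) := \sum_{k\in S} x_k(t)-T_i(t)$ attains its global minimum $0$ at $t=s$; hence at every $s$ where both $T_i$ and $g_S$ are differentiable, the derivative of $g_S$ vanishes, giving $\dot T_i(s) = \sum_{k\in S}\dot x_k(s)$. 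Because only finitely many subsets $S$ are involved, this identity holds simultaneously for every $S$ at almost every $s$. The lexicographic tiebreaking of Definition~\ref{def:sigma} then guarantees that $S_0^\star := \{\sigma_s(1),\ldots,\sigma_s(i-1)\}$ and $S^\star := S_0^\star \cup \{\sigma_s(i)\}$ are respectively optimal for $T_{i-1}$ and $T_i$ at $s$, and subtracting the two resulting identities yields
\begin{equation*}
\dot y_i(s) = \dot T_i(s)-\dot T_{i-1}(s) = \dot x_{\sigma_s(i)}(s) \quad \text{for a.e.\ } s\in\R^+.
\end{equation*}

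Combining with equation (\ref{sys}) applied to the agent $\sigma_s(i)$ and substituting the summation index $k = \sigma_s(j)$ yields
\begin{equation*}
\dot y_i(s)=\sum_{k\in\NN}a_{\sigma_s(i)k}(s)(x_k(s)-x_{\sigma_s(i)}(s))=\sum_{j\in\NN} b_{ij}(s)(y_j(s)-y_i(s)),
\end{equation*}
which is precisely the integrand of (\ref{sys:equiv}). Integrating over $[0,t]$ concludes the proof. The main obstacle is controlling $\dot y_i$ at times when several agents share the level $y_i(s)$; the global-minimum argument applied to $g_{S^\star}$ and $g_{S_0^\star}$ handles this cleanly because the relevant optimal subsets can be selected compatibly with the lexicographic ordering, so no ad hoc treatment of ties is required.
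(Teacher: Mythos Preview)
Your argument is correct. The key device---passing to the partial sums $T_i(t)=\min_{|S|=i}\sum_{k\in S}x_k(t)$ and using that a nonnegative absolutely continuous function has zero derivative at its zeros---cleanly resolves the differentiability issue at times with ties, and the nested choice $S_0^\star\subset S^\star$ coming from $\sigma_s$ makes the subtraction $\dot T_i-\dot T_{i-1}=\dot x_{\sigma_s(i)}$ valid at almost every $s$. Two minor points you might state explicitly for completeness: (i) the integrand on the right of (\ref{sys:equiv}) is measurable and locally integrable, since $\sigma_s$ is a measurable (indeed piecewise constant in $s$) selection and the $a_{ij}$ are locally summable while the $y_j$ are bounded; (ii) equation (\ref{sys}) gives $\dot x_k(s)$ equal to its right-hand side only almost everywhere, so one more null set must be removed before the final substitution.

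As for comparison with the paper: the paper does not supply its own proof of this proposition. It is stated with attribution to \cite{Hendrickx2011} and used as a black box, so there is no in-paper argument to compare against. Your write-up is therefore a self-contained justification of a result the paper merely quotes.
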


The framework to use our equivalent system is now settled. We now give several properties of this system that will be useful for the proof of Proposition~\ref{prop:small-move}.

\begin{lemma}\label{l:r-bound-b}
The interaction weights $b_{ij}$ satisfy the following inequalities
\begin{equation*}
\forall S\neq \emptyset, S\subsetneq \NN, \: \forall s \in [0,t], \; \frac{1}{\rr(t)}\ssum_{i\in S, j \notin S}b_{ij}(s) \leq \ssum_{i\in S, j \notin S}b_{ji}(s) \leq \rr(t) \ssum_{i\in S, j \notin S}b_{ij}(s).
\end{equation*}
\end{lemma}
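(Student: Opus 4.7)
The plan is to reduce the inequality for $b_{ij}$ directly to the inequality (\ref{eq:r-bound}) already established for $a_{ij}$, by exploiting the fact that $\sigma_s$ is a bijection of $\NN$.

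Fix $s \in [0,t]$ and a non-empty proper subset $S \subsetneq \NN$. Set $S' = \sigma_s(S)$. Because $\sigma_s$ is a permutation of $\NN$, $S'$ is again a non-empty proper subset of $\NN$, and its complement is $\NN \setminus S' = \sigma_s(\NN \setminus S)$. Performing the change of summation variable $(i,j) \mapsto (i', j') = (\sigma_s(i), \sigma_s(j))$ and using the definition $b_{ij}(s) = a_{\sigma_s(i) \sigma_s(j)}(s)$, I would rewrite
\begin{equation*}
\ssum_{i\in S, j\notin S} b_{ij}(s) = \ssum_{i'\in S', j'\notin S'} a_{i'j'}(s),
\qquad
\ssum_{i\in S, j\notin S} b_{ji}(s) = \ssum_{i'\in S', j'\notin S'} a_{j'i'}(s).
\end{equation*}

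At this point I would simply apply inequality (\ref{eq:r-bound}) to the subset $S'$ and the same time $s \in [0,t]$, which gives
\begin{equation*}
\frac{1}{\rr(t)} \ssum_{i'\in S', j'\notin S'} a_{i'j'}(s) \leq \ssum_{i'\in S', j'\notin S'} a_{j'i'}(s) \leq \rr(t) \ssum_{i'\in S', j'\notin S'} a_{i'j'}(s),
\end{equation*}
and substituting back yields the stated double inequality for the $b$-weights. Since $s$ and $S$ were arbitrary, this concludes the proof.

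There is no real obstacle here; the whole content is the bijection $\sigma_s : S \to S'$, which automatically pairs the crossing edges $(j,i)$ with $i\in S,\, j\notin S$ in the $b$-labelling with the crossing edges $(j',i')$ with $i'\in S',\, j'\notin S'$ in the $a$-labelling. The only thing worth being careful about is that $\sigma_s$ depends on $s$, so the subset $S'$ used to invoke (\ref{eq:r-bound}) depends on $s$ as well; but since (\ref{eq:r-bound}) is stated uniformly over all non-empty proper subsets, this causes no difficulty.
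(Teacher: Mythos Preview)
Your proof is correct and follows essentially the same approach as the paper: define $S'=\sigma_s(S)$, use that $\sigma_s$ is a bijection to rewrite the $b$-sums as $a$-sums over the cut $(S',\NN\setminus S')$, and then invoke (\ref{eq:r-bound}). Your additional remark that $S'$ depends on $s$ but that (\ref{eq:r-bound}) is uniform over subsets is a nice clarification not made explicit in the paper.
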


\begin{proof}
Let $s$ in $[0,t]$ and $S$ be some non-empty strict subset of $\NN$. Denote $S' = \sigma_s(S)$. Since $\sigma_s$ is a bijection, we have $\sigma_s(\NN \setminus S) = \NN \setminus S'$. Thus, using the definition of  $b_{ij}(s)$,
$$
\ssum_{i\in S, j \notin S}b_{ij}(s) = \ssum_{i\in S, j \notin S}a_{\sigma_s(i)\sigma_s(j)}(s)
= \ssum_{i\in S', j \notin S'} a_{ij}(s)
$$
and
$$
\ssum_{i\in S, j \notin S}b_{ji}(s) = \ssum_{i\in S, j \notin S}a_{\sigma_s(j)\sigma_s(i)}(s)
= \ssum_{i\in S', j \notin S'} a_{ji}(s).
$$
Equation (\ref{eq:r-bound}) with $S:=S'$ allows us to conclude.
\end{proof}

\begin{lemma}\label{l:subbij}
Let $t$ and $t'$ such that $0\le t<t'$, and let us assume that there exist $l \in \{1,\dots,n-1\}$, $c\in \R$ and $C\in \R$ with $c<C$ and satisfying
\begin{equation}\label{eq:minor}
\forall s \in [t,t'],\;  y_l(s) \leq c \text{ and } C \leq y_{l+1}(s).
\end{equation}
Then, for all $s \in [t,t']$, sets $\sigma_s(\{1,\dots,l\})$ and $\sigma_s(\{l+1,\dots,n\})$ remain constant.
\end{lemma}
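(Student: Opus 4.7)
The plan is to leverage the order preservation of $y$ to translate the bounds on $y_l, y_{l+1}$ into statements about the actual positions $x_k$, and then use the continuity of each $x_k$ to argue that no agent can cross the gap $(c,C)$.

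First I would apply Proposition~\ref{prop:order}: since $y_1(s)\le y_2(s) \le \cdots \le y_n(s)$ for every $s$, the hypothesis $y_l(s)\le c$ and $y_{l+1}(s) \ge C$ yields $y_i(s) \le c$ for $i\le l$ and $y_i(s) \ge C$ for $i \ge l+1$, for every $s\in [t,t']$. By the definition $y_i(s) = x_{\sigma_s(i)}(s)$, this is equivalent to $x_{\sigma_s(i)}(s) \le c$ for $i \le l$ and $x_{\sigma_s(i)}(s) \ge C$ for $i \ge l+1$. Therefore
$$\sigma_s(\{1,\dots,l\}) \subseteq \{k\in \NN : x_k(s) \le c\} \quad \text{and} \quad \sigma_s(\{l+1,\dots,n\}) \subseteq \{k\in \NN : x_k(s) \ge C\}.$$
Because $c<C$, these two target sets are disjoint, and since $\sigma_s$ is a bijection of $\NN$ the two inclusions above are in fact equalities for every $s\in [t,t']$.

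Next I would show that $L(s):=\{k\in \NN : x_k(s) \le c\}$ does not depend on $s\in [t,t']$. By Theorem~\ref{th:existence-uniqueness} the trajectory $x$ is locally absolutely continuous, so each coordinate $x_k : \R^+ \to \R$ is continuous. For a fixed agent $k\in \NN$ and any $s\in [t,t']$, the equalities just obtained force $x_k(s)$ to belong to $(-\infty,c]\cup [C,+\infty)$. By the intermediate value theorem applied to the continuous function $s\mapsto x_k(s)$ on $[t,t']$, and the fact that the value $\tfrac{c+C}{2}\in (c,C)$ is never attained, $x_k$ cannot move from one of the half-lines $(-\infty,c]$ to the other on $[t,t']$. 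Hence the indicator $\mathbf{1}_{x_k(s)\le c}$ is constant in $s$, and therefore $L(s)$ is constant on $[t,t']$.

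Combining these two steps, $\sigma_s(\{1,\dots,l\}) = L(s)$ is constant in $s\in [t,t']$, and similarly $\sigma_s(\{l+1,\dots,n\}) = \NN \setminus L(s)$ is constant. There is no real obstacle here beyond being careful about the role of the lexicographic tie-breaking in Definition~\ref{def:sigma}, which affects only the identification of which index in $\{1,\dots,l\}$ is assigned to a given agent; the \emph{set} $\sigma_s(\{1,\dots,l\})$ is determined by the unordered partition of $\NN$ induced by the threshold $c$, and that is what our argument freezes on $[t,t']$.
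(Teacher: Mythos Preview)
Your proof is correct and follows essentially the same approach as the paper: both arguments use the ordering of $y$ to confine every $x_k(s)$ to $(-\infty,c]\cup[C,+\infty)$ and then invoke continuity of $x_k$ together with the intermediate value theorem (at the value $(c+C)/2$) to rule out any agent crossing the gap. The only difference is cosmetic: the paper phrases this as a proof by contradiction (pick an agent that switches sides and derive a forbidden value of $y_m(\tau)$), whereas you give the direct version by first identifying $\sigma_s(\{1,\dots,l\})$ with the sublevel set $L(s)=\{k:x_k(s)\le c\}$ and then freezing $L(s)$.
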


\begin{proof} Let us assume that $\sigma_\cdot(\{1,\dots,l\})$ does not remain constant. Then, there exists $k \in \sigma_t(\{1,\dots,l\})$ and $s$ in $[t,t']$ such that $k \notin \sigma_s(\{1,\dots,l\})$. Since $\sigma_s$ is a bijection, we have $k$ in $\sigma_s(\{l+1,\dots,n\})$. Denote $i\in\{1,\dots,l\}$ and $j\in \{l+1,\dots,n\}$ such that $k = \sigma_t(i)$ and $k = \sigma_s(j)$. We have
$$
x_k(t) = x_{\sigma_t(i)}(t) = y_i(t)\le y_l(t) \leq c
\text{ and }
x_k(s) = x_{\sigma_s(j)}(s) = y_j(s)\ge y_{l+1}(s) \geq C.
$$
By continuity of $x_k$, there exists $\tau\in (t,s)$ such that $x_k(\tau) = \frac{c+C}{2}$. Denote $m \in \NN$ such that $k = \sigma_\tau(m)$. Then $y_m(\tau) = x_{\sigma_\tau(m)}(\tau)=x_k(\tau) = \frac{c+C}{2}$. Then,
(\ref{eq:minor}) gives $y_l(\tau)<y_m(\tau)<y_{l+1}(\tau)$ which is in contradiction with the fact that the coordinates of $y(\tau)$ are ordered.
\end{proof}

The next lemma comes from an adaptation of a proof from \cite{Hendrickx2011}. There, the authors exhibit Lyapunov-like functions $
S_l(t) = \sum_{i=1}^l R^{-i} y_i(t)$ with positive derivatives~\cite[Lemma $1$]{Hendrickx2011}.
Here, we establish a lower bound on their derivatives:
\begin{lemma}\label{l:adaptation_hendrickx}
Assume that for $i,j\in \NN$, $b_{ij}$ are non-negative weights, $R\geq 1$ is some constant satisfying
\begin{equation*}
\forall S\neq \emptyset, S\subsetneq \NN, \; \frac{1}{R}\ssum_{i\in S, j \notin S}b_{ij} \leq \ssum_{i\in S, j \notin S}b_{ji} \leq R \ssum_{i\in S, j \notin S}b_{ij}.
\end{equation*}
Then, for every sorted vector $y$ in $\R^n$, for all $l\in \NN$
\begin{equation*}
\ssum_{i=1}^l R^{-i} \left( \ssum_{j=1}^n b_{ij} (y_j - y_i) \right) \geq 0
\end{equation*}
and for all $l\in \{1,\dots,n-1\}$,
\begin{equation*}
\ssum_{i=1}^l R^{-i} \left( \ssum_{j=1}^n b_{ij} (y_j - y_i) \right) \geq (y_{l+1}-y_l) R^{-l} \ssum_{i=l+1}^n \ssum_{j=1}^{l} b_{ji}.
\end{equation*} 
\end{lemma}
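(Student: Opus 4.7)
The plan is to reduce both inequalities to a single coefficient-by-coefficient nonnegativity check, after decomposing the weighted sum along the "gaps" of the sorted vector $y$. Write $\delta_k := y_{k+1} - y_k \geq 0$ for $k\in\{1,\dots,n-1\}$, and use the telescoping identity $y_j - y_i = \sum_{k=\min(i,j)}^{\max(i,j)-1}\sgn(j-i)\,\delta_k$ to rearrange
\begin{equation*}
T_l := \ssum_{i=1}^l R^{-i}\Bigl(\ssum_{j=1}^n b_{ij}(y_j-y_i)\Bigr) = \ssum_{k=1}^{n-1} c_{l,k}\,\delta_k,
\end{equation*}
where $c_{l,k}$ collects the contributions of every ordered pair $(i,j)$ that "straddles" the cut between $\{1,\dots,k\}$ and $\{k+1,\dots,n\}$. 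A direct bookkeeping (keeping track of signs coming from $\sgn(j-i)$) gives
\begin{equation*}
c_{l,k} = \ssum_{i=1}^{\min(k,l)}\ssum_{j=k+1}^{n} R^{-i} b_{ij} \;-\; \ssum_{i=k+1}^{l}\ssum_{j=1}^{k} R^{-i} b_{ij},
\end{equation*}
with the convention that the second sum is empty when $k\geq l$.

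For the first inequality I then plan to show $c_{l,k}\geq 0$ for every $k$. When $k\geq l$ this is obvious. When $k<l$, the subtracted term is a "partial out-flow" across the cut $S=\{1,\dots,k\}$, while the positive term is the in-flow across that same cut; the two are linked by the cut-reciprocity hypothesis (out-flow $\leq R\cdot$ in-flow when applied with $S=\{1,\dots,k\}$). Using the crude monotonicity bounds $R^{-i}\geq R^{-k}$ for $i\leq k$ and $R^{-i}\leq R^{-(k+1)}$ for $i\geq k+1$, I would estimate the positive term from below by $R^{-k}\cdot(\text{in-flow})$ and the negative term from above by $R^{-(k+1)}\cdot(\text{out-flow}) \leq R^{-(k+1)}\cdot R\cdot(\text{in-flow})=R^{-k}\cdot(\text{in-flow})$. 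The two bounds cancel and give $c_{l,k}\geq 0$.

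For the sharper inequality (the case $l<n$), I plan to isolate the $k=l$ coefficient. The subtracted sum there is empty (no $i$ satisfies $l<i\leq l$), so $c_{l,l} = \sum_{i=1}^l\sum_{j=l+1}^n R^{-i} b_{ij} \geq R^{-l}\sum_{i=1}^l\sum_{j=l+1}^n b_{ij}$. A relabelling $i\leftrightarrow j$ identifies $\sum_{i=1}^l\sum_{j=l+1}^n b_{ij}$ with $\sum_{i=l+1}^n\sum_{j=1}^l b_{ji}$, yielding exactly the advertised lower bound. Since the remaining $c_{l,k}\delta_k$ are nonnegative by the first step, dropping them gives $T_l \geq c_{l,l}\delta_l$, i.e. the target.

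The main obstacle I anticipate is the index bookkeeping in the second step: one must correctly identify which subsums across the cut $S=\{1,\dots,k\}$ represent in-flow versus out-flow, being careful that the reciprocity inequality loses a factor $R$ in precisely the direction we need, and that this factor is absorbed exactly by the gap between $R^{-k}$ and $R^{-(k+1)}$ picked up from the monotonicity bounds on $R^{-i}$. Once this matching is pinned down, the rest of the argument is purely a rearrangement.
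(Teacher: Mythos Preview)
Your argument is correct and is essentially the same as the paper's: both rest on decomposing the weighted sum along the gaps $\delta_k=y_{k+1}-y_k$, bounding the resulting coefficients using the monotonicity $R^{-i}\ge R^{-k}$ for $i\le k$ and $R^{-i}\le R^{-(k+1)}$ for $i\ge k+1$ together with the cut-balance hypothesis, and then isolating the $k=l$ coefficient for the sharper bound. The only difference is packaging: the paper imports the intermediate inequality
\[
\sum_{i=1}^n w_i\!\left(\sum_{j=1}^n b_{ij}(y_j-y_i)\right)\;\ge\;\sum_{k=1}^{n-1}(y_{k+1}-y_k)\!\left(w_k\!\!\sum_{i=k+1}^n\sum_{j=1}^k b_{ji}-w_{k+1}\!\!\sum_{i=k+1}^n\sum_{j=1}^k b_{ij}\right)
\]
from \cite{Hendrickx2011} as a black box, whereas you effectively re-derive it via your explicit formula for $c_{l,k}$ and the monotonicity bounds; your version is therefore self-contained but not methodologically different.
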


\begin{proof} The first inequality is established in~\cite{Hendrickx2011}.
The proof of the second inequality is also adapted from~\cite{Hendrickx2011}. Following the cited paper, we use notation $w_k = R^{-k}$ for $k\in \{1\ldots l\}$ and $w_k=0$ for $k>l$. The following inequality is established in~\cite{Hendrickx2011}: 
\begin{equation*}
\ssum_{i=1}^n w_i \left( \ssum_{j=1}^n b_{ij} (y_j - y_i) \right) \geq \ssum_{k=1}^{n-1} (y_{k+1}-y_k)  \left(  w_k \ssum_{i=k+1}^n \ssum_{j=1}^k b_{ji}  - w_{k+1} \ssum_{i=k+1}^n \ssum_{j=1}^k b_{ij}   \right).
\end{equation*}
Using the definition of $w_k$, this inequality can be rewritten as
\begin{eqnarray*}
\ssum_{i=1}^l R^{-i} \left( \ssum_{j=1}^n b_{ij} (y_j - y_i) \right) 
&\geq& \ssum_{k=1}^{l-1} (y_{k+1}-y_k)R^{-k} \left(  \ssum_{i=k+1}^n \ssum_{j=1}^k b_{ji}  - \frac{1}{R} \ssum_{i=k+1}^n \ssum_{j=1}^k b_{ij}   \right) \\
&&+  (y_{l+1}-y_l) R^{-l} \ssum_{i=l+1}^n \ssum_{j=1}^l b_{ji}.
\end{eqnarray*}
The first term in the right-hand side of the inequality is positive by assumptions on the weights $b_{ij}$ and vector $y$. This observation leads to the desired inequality.
\end{proof}

\subsection{Proof of Proposition~\ref{prop:small-move}}

Proposition~\ref{prop:small-move} is the core of our main result. 
Before going into the details, let us give a brief overview of the proof.
If $\Delta_\NN(t_p) >0$, then there exists a two consecutive positions $y_l$ and $y_{l+1}$ whose distance
 at time $t_p$  is greater than $\Delta_\NN(t_p)/n >0$.
We show, using Assumption~\ref{as:persistent-conn} (persistent connectivity) that this {\it gap}~\footnote[1]{We use the term {\it gap} between agents $y_l$ and $y_{l+1}$ since by definition, no other agent falls between $y_l$ and $y_{l+1}$ (see Proposition~\ref{prop:order}).} between $y_l$ and $y_{l+1}$ will result in 
a quantifiable movement of an agent (Lemma~\ref{l:relative-increase}). Then, we show that the movement of this agent creates
a new quantifiable {\it gap} between two other agents (Lemma~\ref{l:diameter-ind}). We show that this process propagates to one of the extremities 
of the group so that there is a contraction of the group diameter on the time interval $[t_p,t_{p+1}]$ (main proof).

\begin{lemma}\label{l:relative-increase} 
Let $q \in \{0,\ldots,\nn-1\}$, $\tau \in [t_p,t_p^q]$ and $l \in \{1,\ldots,n-1\}$ such that
$y_{l}(\tau)<y_{l+1}(\tau)$.
Then, there exists $m \in \{1,\ldots,l\}$  and $\tau'\in [t_p,t_p^{q+1}]$ such that
\begin{equation*}
y_{m}(\tau') - y_{m}(\tau) \geq \frac{\rrr^{m-(l+1)}}{4n}\left( y_{l+1}(\tau)-y_{l}(\tau) \right),
\end{equation*}
\end{lemma}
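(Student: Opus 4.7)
Set $g := y_{l+1}(\tau) - y_l(\tau) > 0$. The strategy is to produce a single candidate time $\tau^{\ast} \in [\tau, t_p^{q+1}]$ for which either (i) $y_l(\tau^{\ast}) - y_l(\tau) \geq g/4$, in which case $m = l$ and $\tau' = \tau^{\ast}$ conclude directly (since $g/4 \geq \rrr^{-1} g/(4n)$), or (ii) the Lyapunov sum
\[
S_l(s) := \ssum_{i=1}^l \rrr^{-i}\, y_i(s)
\]
satisfies $S_l(\tau^{\ast}) - S_l(\tau) \geq \tfrac{g}{4}\,\rrr^{-(l+1)}$. In case (ii) a pigeonhole argument on the $l$ summands yields some $m \in \{1, \dots, l\}$ with $\rrr^{-m}(y_m(\tau^{\ast}) - y_m(\tau)) \geq \tfrac{g\,\rrr^{-(l+1)}}{4l}$, hence $y_m(\tau^{\ast}) - y_m(\tau) \geq \tfrac{g\,\rrr^{m-(l+1)}}{4l} \geq \tfrac{g\,\rrr^{m-(l+1)}}{4n}$. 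Both $S_l$ and the companion $S_{l+1}$ are non-decreasing by the first inequality of Lemma~\ref{l:adaptation_hendrickx}, whose hypotheses hold with $R = \rrr$ thanks to Lemma~\ref{l:r-bound-b}.

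Concretely, I would introduce the stopping times
\[
\tau_1 := \inf\{s \in [\tau, t_p^{q+1}] : y_l(s) \geq y_l(\tau) + g/4\}, \qquad \tau_2 := \inf\{s \in [\tau, t_p^{q+1}] : y_{l+1}(s) \leq y_{l+1}(\tau) - g/4\},
\]
with the convention $\inf\emptyset := +\infty$, and set $\tau^{\ast} := \min(\tau_1, \tau_2, t_p^{q+1})$. For $s \in [\tau, \tau^{\ast}]$ one has $y_l(s) \leq y_l(\tau) + g/4 < y_{l+1}(\tau) - g/4 \leq y_{l+1}(s)$, so Lemma~\ref{l:subbij} applied with $c := y_l(\tau) + g/4$ and $C := y_{l+1}(\tau) - g/4$ shows that the set $\tilde S := \sigma_s(\{1,\ldots,l\})$ is constant throughout $[\tau, \tau^{\ast}]$. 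This is the key structural property driving the proof.

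The argument then branches according to which quantity achieves the minimum defining $\tau^{\ast}$. If $\tau^{\ast} = \tau_1$, continuity gives $y_l(\tau^{\ast}) - y_l(\tau) = g/4$, placing us in case (i). If $\tau^{\ast} = \tau_2$, then $y_{l+1}(\tau^{\ast}) - y_{l+1}(\tau) = -g/4$; combining $S_{l+1}(\tau^{\ast}) \geq S_{l+1}(\tau)$ with the identity $S_{l+1} = S_l + \rrr^{-(l+1)}\, y_{l+1}$ rearranges to $S_l(\tau^{\ast}) - S_l(\tau) \geq \tfrac{g}{4}\,\rrr^{-(l+1)}$, placing us in case (ii). If $\tau^{\ast} = t_p^{q+1}$, the gap satisfies $y_{l+1}(s) - y_l(s) \geq g/2$ on all of $[\tau, t_p^{q+1}]$; the second inequality of Lemma~\ref{l:adaptation_hendrickx} gives $\dot S_l(s) \geq \tfrac{g}{2}\,\rrr^{-l}\,\ssum_{i=l+1}^n \ssum_{j=1}^l b_{ji}(s)$, and the constancy of $\tilde S$ together with $b_{ji}(s) = a_{\sigma_s(j)\sigma_s(i)}(s)$ identifies the double sum with $\ssum_{i \in \tilde S, j \notin \tilde S} a_{ij}(s)$; applying (\ref{eq:rescaling}) with $S = \tilde S$ on the sub-interval $[t_p^q, t_p^{q+1}] \subseteq [\tau, t_p^{q+1}]$ then produces an integral of this sum at least $1$, whence $S_l(t_p^{q+1}) - S_l(\tau) \geq \tfrac{g}{2}\,\rrr^{-l} \geq \tfrac{g}{4}\,\rrr^{-(l+1)}$, again placing us in case (ii).

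The main obstacle is the subcase $\tau^{\ast} = \tau_2 < t_p^{q+1}$: the interval $[\tau,\tau^{\ast}]$ may be strictly shorter than $[t_p^q,t_p^{q+1}]$, so the integral estimate via (\ref{eq:rescaling}) is not directly available, and on its own the drop of $y_{l+1}$ does not manifestly produce an increase of any $y_m$ with $m \leq l$. The trick that breaks this impasse is to invoke the monotonicity of the companion sum $S_{l+1}$, which incorporates the $y_{l+1}$ term: the forced drop of $y_{l+1}$ must be offset by a commensurate increase of $S_l$, which is precisely what the pigeonhole step then exploits.
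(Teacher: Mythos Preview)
Your proof is correct and follows essentially the same route as the paper's. The paper performs a direct three-case split --- (a) both thresholds $y_l(\tau)+g/4$ and $y_{l+1}(\tau)-g/4$ are respected on all of $[\tau,t_p^{q+1}]$, (b) $y_l$ crosses its threshold, (c) $y_{l+1}$ crosses its threshold --- which is exactly the trichotomy encoded by your stopping time $\tau^\ast=\min(\tau_1,\tau_2,t_p^{q+1})$; in each case the paper uses the same ingredients you do (Lemma~\ref{l:subbij} plus the second inequality of Lemma~\ref{l:adaptation_hendrickx} and~(\ref{eq:rescaling}) for case~(a), direct estimation for case~(b), and non-negativity of $\dot S_{l+1}$ followed by pigeonhole for case~(c)).
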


\begin{proof}
Let us remark that one of the following statements is always satisfied:
\begin{enumerate}
\item[(a)] $\forall s \in [\tau,t_p^{q+1}]$,
$y_l(s) \leq \frac{3}{4} y_l(\tau) + \frac{1}{4} y_{l+1}(\tau)$ and
$y_{l+1}(s) \geq \frac{1}{4} y_{l}(\tau) + \frac{3}{4} y_{l+1}(\tau) $;

\item[(b)] $\exists s \in [\tau,t_p^{q+1}]$, $y_l(s) > \frac{3}{4} y_l(\tau) + \frac{1}{4} y_{l+1}(\tau)$; 

\item[(c)] $\exists s \in [\tau,t_p^{q+1}]$, $y_{l+1}(s) < \frac{1}{4} y_{l}(\tau) + \frac{3}{4} y_{l+1}(\tau)$.
\end{enumerate}
We shall show successively that in all three cases, the expected result holds. In the following, let us denote 
$R =\rrr$. 

{\bf Case (a)}: The assumption of case (a) guarantees that  a minimal distance is preserved between $y_{l+1}(s)$ and $y_l(s)$ for all $s\in [\tau,t_p^{q+1}]$:
\begin{equation}\label{eq:sizeofgap}
y_{l+1}(s)-y_l(s) \geq \frac{1}{2}\left(y_{l+1}(\tau)-y_l(\tau)\right).
\end{equation}
According to Lemma~\ref{l:r-bound-b}, the assumptions of Lemma~\ref{l:adaptation_hendrickx} are satisfied with $y_i:=y_i(s)$ and $b_{ji}:=b_{ji}(s)$ for any $s$ in $[\tau,t_p^{q+1}]\subseteq [0,t_{p+1}]$. Thus, Lemma~\ref{l:adaptation_hendrickx} yields
\begin{equation}\label{eq:hendrickx}
\ssum_{i=1}^l R^{-i} \left( \ssum_{j=1}^n b_{ij}(s) (y_j(s) - y_i(s)) \right) \geq (y_{l+1}(s)-y_l(s))  R^{-l} \ssum_{i=l+1}^n \ssum_{j=1}^{l} b_{ji}(s).
\end{equation}
Equations  (\ref{eq:hendrickx}) and (\ref{eq:sizeofgap}) then give us for all $s\in [\tau,t_p^{q+1}]$:
\begin{equation*}
\ssum_{i=1}^l R^{-i} \left( \ssum_{j=1}^n b_{ij}(s) (y_j(s) - y_i(s)) \right) \geq 
\frac{1}{2}\left(y_{l+1}(\tau)-y_l(\tau)\right) R^{-l} \ssum_{i=l+1}^n \ssum_{j=1}^{l} b_{ji}(s).
\end{equation*}
Then, one can integrate over time interval $[\tau,t_p^{q+1}]$ both sides of the previous inequality to obtain
\begin{equation*}
\ssum_{i=1}^l R^{-i} (y_i(t_p^{q+1})-y_i(\tau)) \geq \frac{1}{2}\left(y_{l+1}(\tau)-y_l(\tau)\right)  R^{-l} \ssum_{i=l+1}^n \ssum_{j=1}^{l} \int_\tau^{t_p^{q+1}} b_{ji}(s) ds.
\end{equation*}
Using the assumption of case (a), Lemma~\ref{l:subbij}, with $c:=\frac{3}{4} y_l(\tau) + \frac{1}{4} y_{l+1}(\tau)$ and $C:=\frac{1}{4} y_l(\tau) + \frac{3}{4} y_{l+1}(\tau)$, states that sets $\sigma_s(\{1,\ldots,l\})$ and $\sigma_s(\{l+1,\ldots,n\})$ remain constant over $[\tau,t_p^{q+1}]$. Therefore, 
$$
\ssum_{i=l+1}^n \ssum_{j=1}^{l} \int_\tau^{t_p^{q+1}} b_{ji}(s) ds
= \ssum_{i=l+1}^n \ssum_{j=1}^{l} \int_\tau^{t_p^{q+1}} a_{\sigma_\tau(j)\sigma_\tau(i)}(s) ds
= \ssum_{j' \in S} \ssum_{i' \in \NN \setminus S} \int_\tau^{t_p^{q+1}} a_{j'i'}(s) ds
$$
where $S = \sigma_\tau(\{1,\ldots,l\})$ (since $\sigma_\tau$ is a bijection, the change of index is possible).
This is where we make use of the persistent connectivity assumption. Since interval $[\tau,t^{q+1}]$ includes $[t^q,t^{q+1}]$ and the terms inside the integrals are positive, equation (\ref{eq:rescaling}) gives 
$$\ssum_{j' \in S} \ssum_{i' \in \NN \setminus S} \int_\tau^{t_p^{q+1}} a_{j'i'}(s) ds \geq 1.$$
So,
\begin{equation*}
\ssum_{i=1}^l R^{-i} (y_i(t_p^{q+1})-y_i(\tau)) \geq \frac{1}{2}\left(y_{l+1}(\tau)-y_l(\tau)\right)  R^{-l}.
\end{equation*}
Denote $m\in\{1,\ldots,l\}$ the maximal argument of the sum on the left hand side of the previous inequality. 
Let $\tau'=t_p^{q+1}$, then
\begin{equation*}
l R^{-m} (y_{m}(\tau')-y_{m}(\tau)) \geq  \frac{1}{2}\left(y_{l+1}(\tau)-y_l(\tau)\right)  R^{-l}
\end{equation*}
which using $l \le n$ gives us with $R \ge 1$
\begin{equation}
\label{eq:case1}
y_{m}(\tau')-y_{m}(\tau) \geq \frac{1}{2l}\left(y_{l+1}(\tau)-y_l(\tau)\right)  R^{m-l}
\geq \frac{1}{4n}\left(y_{l+1}(\tau)-y_l(\tau)\right)  R^{m-(l+1)}.
\end{equation}

{\bf Case (b)}: Let $m = l$ and $\tau' = s$. Then using the assumption of case (b), 
and $m = l$ with $R \ge 1$,
we obtain
\begin{equation}
\label{eq:case2}
y_{m}(\tau')-y_{m}(\tau) \geq \frac{1}{4}\left(y_{l+1}(\tau)-y_l(\tau)\right) 
\geq \frac{1}{4n}\left(y_{l+1}(\tau)-y_l(\tau)\right)  R^{m-(l+1)}.
\end{equation}

{\bf Case (c)}:
According to Lemma~\ref{l:r-bound-b}, the assumptions of Lemma~\ref{l:adaptation_hendrickx} are satisfied with $l:=l+1$, $y_i:=y_i(t)$ and $b_{ji}:=b_{ji}(t)$ for any $t\in [\tau,t_p^{q+1}]\subseteq [0,t_{p+1}]$. Thus, Lemma~\ref{l:adaptation_hendrickx} yields
for all $t\in [\tau,t_p^{q+1}]$
\begin{equation*}
\ssum_{i=1}^{l+1} R^{-i} \left( \ssum_{j=1}^n b_{ij}(t) (y_j(t) - y_i(t)) \right) \geq 0.
\end{equation*}
Integrating this inequality over interval $[\tau,s]\subseteq [\tau,t_p^{q+1}]$ gives
\begin{equation*}
\ssum_{i=1}^{l+1} R^{-i} \left( y_i(s) - y_i(\tau) \right)=
\ssum_{i=1}^{l} R^{-i} \left( y_i(s) - y_i(\tau) \right)
+R^{-(l+1)} \left( y_{l+1}(s) - y_{l+1}(\tau) \right)
 \geq 0.
\end{equation*}
The assumption of case (c) then gives
$$
\ssum_{i=1}^{l} R^{-i} \left( y_i(s) - y_i(\tau) \right)
\ge  R^{-(l+1)} \left( y_{l+1}(\tau) - y_{l+1}(s) \right)\ge 
R^{-(l+1)} \frac{1}{4}\left(y_{l+1}(\tau)-y_{l}(\tau)\right).
$$
Denote $m\in\{1,\ldots,l\}$ the maximal argument of the sum on the left hand side of the previous inequality, 
let $\tau'=s$. Then
\begin{equation*}
l R^{-m}(y_{m}(\tau')-y_{m}(\tau)) \geq R^{-(l+1)} \frac{1}{4}\left(y_{l+1}(\tau)-y_{l}(\tau)\right).
\end{equation*}
which using $l \le n$ rewrites to
\begin{equation}
\label{eq:case3}
y_{m}(\tau')-y_{m}(\tau) \geq R^{m-(l+1)} \frac{1}{4l}\left(y_{l+1}(\tau)-y_{l}(\tau)\right) \ge  
R^{m-(l+1)} \frac{1}{4n}\left(y_{l+1}(\tau)-y_{l}(\tau)\right).
\end{equation}
\end{proof}

\begin{lemma}\label{l:diameter-ind} 
Let $q \in \{0,\ldots,\nn-1\}$, $\tau \in [t_p,t_p^q]$ and $l \in \{1,\ldots,n-1\}$ such that
$y_{l}(\tau)<y_{l+1}(\tau)$.
Then, there exists $\tau'\in [t_p,t_p^{q+1}]$ such that
one of the following assertions holds:
\begin{equation*}
y_{1}(\tau')-y_1(\tau) \geq \frac{\rrr^{-l}}{8n}\left( y_{l+1}(\tau)-y_{l}(\tau) \right)
\end{equation*}
or
\begin{equation*}
\exists l' \in \{1,\ldots,l-1\},\;
y_{l'+1}(\tau')-y_{l'}(\tau') \geq \frac{\rrr^{l'-l}}{8n^2}\left( y_{l+1}(\tau)-y_{l}(\tau) \right).
\end{equation*}
\end{lemma}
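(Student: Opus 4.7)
The plan is to deduce this lemma directly from Lemma~\ref{l:relative-increase} by a dichotomy argument. Applying Lemma~\ref{l:relative-increase} to the given $q$, $\tau$ and $l$, I obtain some index $m\in\{1,\ldots,l\}$ and some $\tau'\in[t_p,t_p^{q+1}]$ with
\[
y_m(\tau')-y_m(\tau)\;\geq\;\delta,\qquad \text{where }\delta:=\frac{\rrr^{m-(l+1)}}{4n}\bigl(y_{l+1}(\tau)-y_l(\tau)\bigr).
\]
The goal is then to transfer this lower bound on the displacement of $y_m$ either into a lower bound on the displacement of $y_1$ (giving the first assertion) or into a lower bound on some adjacent gap $y_{l'+1}(\tau')-y_{l'}(\tau')$ with $l'<l$ (giving the second assertion).

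First I would dispose of the easy case $m=1$: then Lemma~\ref{l:relative-increase} already gives $y_1(\tau')-y_1(\tau)\geq \frac{\rrr^{-l}}{4n}(y_{l+1}(\tau)-y_l(\tau))$, which a fortiori yields the first assertion. For the remaining case $m\geq 2$, I split according to whether the motion of $y_1$ accounts for a substantial share of the motion of $y_m$. If $y_1(\tau')-y_1(\tau)\geq \delta/2$, then since $m\geq 1$ implies $\rrr^{m-(l+1)}\geq \rrr^{-l}$ (using $\rrr\geq 1$), the first assertion follows with the constant $\frac{1}{8n}$.

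Otherwise $y_1(\tau')-y_1(\tau)<\delta/2$. Here I use that $y_1$ is non-decreasing (Proposition~\ref{prop:order}) together with the sorted order $y_1(\tau)\leq y_m(\tau)$ to write
\[
y_m(\tau')-y_1(\tau')=\bigl(y_m(\tau')-y_m(\tau)\bigr)-\bigl(y_1(\tau')-y_1(\tau)\bigr)+\bigl(y_m(\tau)-y_1(\tau)\bigr)\;\geq\;\delta-\delta/2+0=\delta/2.
\]
Telescoping the sorted sequence gives $y_m(\tau')-y_1(\tau')=\sum_{l'=1}^{m-1}\bigl(y_{l'+1}(\tau')-y_{l'}(\tau')\bigr)$, so by the pigeonhole principle there exists $l'\in\{1,\ldots,m-1\}$ with $y_{l'+1}(\tau')-y_{l'}(\tau')\geq \delta/(2(m-1))\geq \delta/(2n)$. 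Since $m\leq l$ we have $l'\leq m-1\leq l-1$, so $l'\in\{1,\ldots,l-1\}$ as required; and since $\rrr\geq 1$ together with $l'\leq m-1$ gives $\rrr^{m-(l+1)}\geq \rrr^{l'-l}$, substituting the value of $\delta$ yields the second assertion with the constant $\frac{1}{8n^2}$.

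There is no real obstacle here; the only points to watch are the bookkeeping of the exponent inequalities $\rrr^{m-(l+1)}\geq \rrr^{-l}$ and $\rrr^{m-(l+1)}\geq \rrr^{l'-l}$, and the verification that the index $l'$ produced by the pigeonhole step indeed lies in $\{1,\ldots,l-1\}$, both of which follow from $m\in\{1,\ldots,l\}$ and $\rrr(t)\geq 1$.
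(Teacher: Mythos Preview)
Your proof is correct and follows essentially the same approach as the paper: invoke Lemma~\ref{l:relative-increase}, dispose of the case $m=1$, and for $m\ge 2$ split the quantity $y_m(\tau')-y_1(\tau)=\bigl(y_1(\tau')-y_1(\tau)\bigr)+\bigl(y_m(\tau')-y_1(\tau')\bigr)$ so that one of the two summands is at least $\delta/2$, then telescope in the second case. The only (harmless) difference is that you cite the monotonicity of $y_1$ at a point where it is not actually needed; the bound $y_m(\tau')-y_1(\tau')\ge\delta/2$ follows already from the case assumption $y_1(\tau')-y_1(\tau)<\delta/2$ together with $y_m(\tau)\ge y_1(\tau)$.
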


\begin{proof} From Lemma~\ref{l:relative-increase}, we obtain that there exists $m \in \{1,\ldots,l\}$  and $\tau'\in [t_p,t_p^{q+1}]$ such that
\begin{equation*}
y_{m}(\tau') - y_{m}(\tau) \geq \frac{\rrr^{m-(l+1)}}{4n}\left( y_{l+1}(\tau)-y_{l}(\tau) \right),
\end{equation*}

Let us first handle the case where $m=1$ (note that this is always the case if $l=1$). In that case, the previous inequality gives
\begin{equation*}
y_{1}(\tau') - y_{1}(\tau) \geq \frac{\rrr^{1-(l+1)}}{4n}\left( y_{l+1}(\tau)-y_{l}(\tau) \right)
\geq \frac{\rrr^{-l}}{8n}\left( y_{l+1}(\tau)-y_{l}(\tau) \right)
\end{equation*}
Thus the first assertion holds.

If $m>1$, using the fact that $y_m(\tau)\ge y_1(\tau)$, we have
$$
y_m(\tau')-y_1(\tau')+y_1(\tau')-y_1(\tau)\ge y_m(\tau')- y_{m}(\tau)
 \ge  \frac{\rrr^{m-(l+1)}}{4n}\left( y_{l+1}(\tau)-y_{l}(\tau) \right)
$$
Thus, either 
$$
y_{1}(\tau') - y_{1}(\tau) \geq \frac{\rrr^{m-(l+1)}}{8n}\left( y_{l+1}(\tau)-y_{l}(\tau) \right)
$$
or
$$
y_m(\tau')-y_1(\tau') \ge \frac{\rrr^{m-(l+1)}}{8n}\left( y_{l+1}(\tau)-y_{l}(\tau) \right).
$$
In the first case, $m> 1$ gives that the first assertion holds. In the second case, let us remark that
$$
y_m(\tau')-y_1(\tau') =\sum_{i=1}^{m-1} \left(y_{i+1}(\tau')-y_i(\tau')\right). 
$$
Let $l'\in \{1,\dots,m-1\}\subseteq \{1,\dots,l-1\}$ be the maximal argument of this sum. Then, 
$$
(m-1)\left(y_{l'+1}(\tau')-y_{l'}(\tau')\right)\ge \frac{\rrr^{m-(l+1)}}{8n}\left( y_{l+1}(\tau)-y_{l}(\tau) \right)
$$
which gives using the fact that $l'\le m-1 \le n$ and $\rrr \ge 1$ 
$$
y_{l'+1}(\tau')-y_{l'}(\tau') \ge \frac{\rrr^{m-(l+1)}}{8n(m-1)}\left( y_{l+1}(\tau)-y_{l}(\tau) \right)
\ge \frac{\rrr^{l'-l}}{8n^2}\left(y_{l+1}(\tau)-y_{l}(\tau) \right).
$$
Thus the second assertion holds.
\end{proof}

\begin{proof}[Proof of Proposition~\ref{prop:small-move}]
Let us recall that we must show that 
\begin{equation}
\label{eq:goal}
\Delta_\NN(t_{p+1}) \leq \left(1 - \frac{\rrr^{-\nn}}{(8n^2)\nn} \right)
\Delta_\NN(t_p).
\end{equation}
If $\Delta_\NN(t_p)=0$, since $\Delta_\NN$ is non-increasing we obtain $\Delta_\NN(t_{p+1})=0$
and (\ref{eq:goal}) clearly holds.

Let us assume that $\Delta_\NN(t_p)>0$. Let us remark that
$$
\Delta_\NN(t_p) =\sum_{i=1}^{n-1} \left(y_{i+1}(t_p)-y_i(t_p)\right). 
$$
Let $l^0\in \{1,\dots,n-1\}$ be the maximal argument of this sum. Then, 
\begin{equation}
\label{eq:init}
y_{l^0+1}(t_p)-y_{l^0}(t_p) \ge \frac{1}{n-1}\Delta_\NN(t_p) \ge  \frac{1}{n}\Delta_\NN(t_p) .
\end{equation}
Let us first assume that $l^0 \le \nn$ (the case $l^0>\nn$ will be discussed later).
We now proceed by induction on Lemma~\ref{l:diameter-ind}. There exists a finite decreasing sequence
$l^0>l^1>\dots>l^{\bar q}$ such that $l^{\bar q}=1$ (which implies $\bar q \le \nn-1$) and a finite sequence 
$\tau^0,\tau^1,\dots,\tau^{\bar q},\tau^{\bar q+1}$ such that $\tau^0=t_p$, $\tau^q\in [t_p,t_p^q]\subseteq [t_p,t_{p+1}]$ for all 
$q\in \{1,\dots,\bar q+1\}$ and
\begin{equation*}
y_{l^{q}+1}(\tau^{q})-y_{l^{q}}(\tau^{q}) \geq \frac{\rrr^{l^{q}-l^{q-1}}}{8n^2}\left( y_{l^{q-1}+1}(\tau^{q-1})-y_{l^{q-1}}(\tau^{q-1}) \right), \; \forall q\in \{1,\dots,\bar q\}
\end{equation*}
and
\begin{equation*}
y_{1}(\tau^{\bar q+1})-y_{1}(\tau^{\bar q}) \geq \frac{\rrr^{-l^{{\bar q}}}}{8n}\left( y_{l^{{\bar q}}+1}(\tau^{{\bar q}})-y_{l^{{\bar q}}}(\tau^{{\bar q}}) \right).
\end{equation*}
The previous equations together with (\ref{eq:init}) give
\begin{eqnarray*}
y_{1}(\tau^{\bar q+1})-y_{1}(\tau^{\bar q}) & \geq &\frac{\rrr^{-l^{{\bar q}}}}{8n}
\left(\prod_{q=1}^{\bar q} \frac{\rrr^{l^{q}-l^{q-1}}}{8n^2}\right)
\left( y_{l^{0}+1}(\tau^{0})-y_{l^{0}}(\tau^{0}) \right)\\
& \ge &  \frac{\rrr^{-l^{0}}}{(8n^2)^{\bar q +1}} \Delta_\NN(t_p).
\end{eqnarray*}
Using the fact that $l^0\le \nn$, $\bar q \le \nn-1$ and $\rrr \ge 1$, we can write
\begin{equation}
\label{eq:pr1}
y_{1}(\tau^{\bar q+1})-y_{1}(\tau^{\bar q}) \ge \frac{\rrr^{-\nn}}{(8n^2)^{\nn}} \Delta_\NN(t_p).
\end{equation}
$\tau^{\bar q}$ and $\tau^{\bar q+1}$ both belong to $[t_p,t_{p+1}]$ then since $y_1$ is non-decreasing, we have
$y_1(t_p)\le y_1(\tau^{\bar q})$ and $y_1(\tau^{\bar q+1}) \le y_1(t_{p+1})$. Therefore,
\begin{equation}
\label{eq:pr2}
y_1(t_{p+1})-y_1(t_p) \ge y_{1}(\tau^{\bar q+1})-y_{1}(\tau^{\bar q}).
\end{equation}
Also $y_n$ is non-increasing. Therefore, $y_n(t_{p+1}) \le y_n(t_p)$ and
\begin{eqnarray*}
\Delta_\NN(t_{p+1}) &=& y_n(t_{p+1})-y_1(t_{p+1})  \\
&\le& y_n(t_p) -y_1(t_p) +y_1(t_p) -y_1(t_{p+1}) = 
\Delta_\NN(t_{p}) +y_1(t_p) -y_1(t_{p+1}). 
\end{eqnarray*}
This inequality with (\ref{eq:pr1}) and (\ref{eq:pr2}) gives (\ref{eq:goal}).

If $l^0 \ge \nn+1$, then by adapting Lemmas~\ref{l:relative-increase} and~\ref{l:diameter-ind}, we can show that there exists a finite increasing sequence
$l^0<l^1<\dots<l^{\bar q}$ such that $l^{\bar q}=n$ (which implies $\bar q \le \nn-1$) and a finite sequence 
$\tau^0,\tau^1,\dots,\tau^{\bar q},\tau^{\bar q+1}$ such that $\tau^0=t_p$, $\tau^q\in [t_p,t_p^q]\subseteq [t_p,t_{p+1}]$ for all 
$q\in \{1,\dots,\bar q+1\}$ and
$$
y_{n}(\tau^{\bar q+1})-y_{n}(\tau^{\bar q}) \le - \frac{\rrr^{-\nn}}{(8n^2)^{\nn}} \Delta_\NN(t_p).
$$
Similar to the previous case, this equation would also lead to (\ref{eq:goal}).
This ends the proof of Proposition~\ref{prop:small-move}.
\end{proof}

\section{Conclusion}

In this paper, we have developed new results on consensus for continuous multi-agent systems. 
We have introduced the assumptions of persistent connectivity (Assumption~\ref{as:persistent-conn}) and slow divergence of reciprocal interaction weights (Assumption~\ref{as:weak-sym}). Our assumption on the reciprocal weight ratio generalizes the cut-balanced weights assumption proposed recently in~\cite{Hendrickx2011}. Unlike the cut-balanced weights assumption, it enables for indefinitely divergent reciprocal weights which, to the best of the authors' knowledge, is completely new. Moreover, our proof allows us to give an estimate of the rate of convergence to the consensus which is not available in~\cite{Hendrickx2011}.
We have also provided  examples illustrating why we believe that both of our assumptions are tight.
In the future, we would like to relax the assumption on reciprocal weights so that reciprocity does not need to instantaneous (i.e. if a group $S$ of agents receives some influence from the rest of the agents, these will be influenced back by agents of $S$ in the future).

\bibliographystyle{alpha}
\bibliography{references}

\end{document}